\theoremstyle{plain}
\newtheorem{theorem}{Theorem}[section]
\newtheorem{maintheorem}{Theorem}
\newtheorem{proposition}[theorem]{Proposition}
\newtheorem{lemma}[theorem]{Lemma}
\theoremstyle{definition}
\newtheorem{definition}[theorem]{Definition}
\newtheorem{remark}[theorem]{Remark}
\newcommand{\nc}{\newcommand}
\nc{\dmo}{\DeclareMathOperator}
\nc{\Q}{\mathbb{Q}}
\nc{\F}{\mathbb{F}}
\nc{\R}{\mathbb{R}}
\nc{\Z}{\mathbb{Z}}
\nc{\C}{\mathbb{C}}
\nc{\Ell}{\mathcal{L}}
\nc{\M}{\mathcal{M}}
\nc{\K}{\mathcal{K}}
\nc{\I}{\mathcal{I}}
\nc{\disk}{\mathbb{D}}
\nc{\hyp}{\mathbb{H}}
\nc{\CP}{\mathbb{CP}}
\nc{\cS}{\mathcal{S}}
\dmo{\Mod}{Mod}
\dmo{\Diff}{Diff}
\dmo{\Homeo}{Homeo}
\dmo{\dist}{dist}
\dmo\BDiff{BDiff}
\dmo\SO{SO}
\dmo\Hom{Hom}
\dmo\SL{SL}
\dmo\Sp{Sp}
\dmo\rank{rank}
\dmo\sig{sig}
\dmo\Out{Out}
\dmo\Aut{Aut}
\dmo\Inn{Inn}
\dmo\GL{GL}
\dmo\PSL{PSL}
\dmo\BHomeo{BHomeo}
\dmo\EHomeo{EHomeo}
\dmo\EDiff{EDiff}
\nc\Sig{\Sigma}
\dmo\Teich{Teich}
\dmo\Fix{Fix}
\nc{\pair}[1]{\langle #1 \rangle}
\nc{\abs}[1]{\left| #1 \right|}
\nc{\action}{\circlearrowright}
\nc{\norm}[1]{\left | \left | #1 \right | \right |}
\nc{\abcd}[4]{\left(\begin{array}{cc} #1 & #2 \\ #3 & #4 \end{array}\right)}
\dmo{\Isom}{Isom}
\nc{\normal}{\vartriangleleft}
\dmo{\Vol}{Vol}
\dmo{\im}{Im}
\dmo{\Push}{Push}
\dmo{\Conf}{Conf}
\dmo{\PConf}{PConf}
\dmo{\id}{id}
\dmo{\Jac}{Jac}
\renewcommand{\epsilon}{\varepsilon}
\nc{\coloneq}{\mathrel{\mathop:}\mkern-1.2mu=}
\nc{\margin}[1]{\marginpar{\scriptsize #1}}
\nc{\para}[1]{\medskip\noindent\textbf{#1.}}
\nc{\red}[1]{\textcolor{red}{#1}}
\title[Cup products, Johnson invariants, and MMM classes]{Cup products in surface bundles, higher Johnson invariants, and MMM classes}
\author{Nick Salter}
\email{nks@math.uchicago.edu}
\date{\today}
\address{Department of Mathematics\\ University of Chicago\\ 5734 S. University Ave., Chicago, IL 60637}
\begin{document}
\maketitle
\begin{abstract}
In this paper we prove a family of results connecting the problem of computing cup products in surface bundles to various other objects that appear in the theory of the cohomology of the mapping class group $\operatorname{Mod}_g$ and the Torelli group $\mathcal{I}_g$. We show that N. Kawazumi's twisted MMM class $m_{0,k}$ can be used to compute $k$-fold cup products in surface bundles, and that $m_{0,k}$ provides an extension of the higher Johnson invariant $\tau_{k-2}$ to $H^{k-2}(\operatorname{Mod}_{g,*}, \wedge^k H_1)$. These results are used to show that the behavior of the restriction of the even MMM classes $e_{2i}$ to $H^{4i}(\mathcal{I}_g^1)$ is completely determined by $\im(\tau_{4i}) \le \wedge^{4i+2}H_1$, and to give a partial answer to a question of D. Johnson. We also use these ideas to show that all surface bundles with monodromy in the Johnson kernel $\mathcal K_{g,*}$ have cohomology rings isomorphic to that of a trivial bundle, implying the vanishing of all $\tau_i$ when restricted to $\mathcal K_{g,*}$. 
\end{abstract}

\section{Introduction}

The theme of this paper is the central role that the structure of the cup product in surface bundles plays in the understanding of the cohomology of the mapping class group and its subgroups. We use this perspective to gain a new understanding of the relationships between several well-known cohomology classes, and we also use these ideas to study the topology of surface bundles.

Denote by $\Mod_g$ (resp. $\Mod_{g,*}, \Mod_g^1$) the mapping class group of a closed oriented surface of genus $g$ (resp. of a closed oriented surface with a marked point, of a surface with one boundary component). The {\em Torelli group} $\I_g$ is defined as the kernel of the symplectic representation $\Psi: \Mod_g \to \Sp(2g,\Z)$; there are analogous definitions of $\I_{g,*}$ and $\I_g^1$. When left unspecified, all homology and cohomology groups will be taken to have coefficients in $\Q$. In particular, we will use the abbreviations $H_1 := H_1(\Sigma_g;\Q)$ and $H^1 := H^1(\Sigma_g;\Q)$.

For $i \ge 1$, there is a class $e_i \in H^{2i}(\Mod_g)$ known as the $i^{th}$ {\em Mumford-Morita-Miller class} (hereafter abbreviated to MMM class). See Definition \ref{definition:mij}. The Madsen-Weiss theorem \cite{MW} asserts that the so-called ``stable'' rational cohomology of $\Mod_g$ is generated by the MMM classes, and apart from a few sporadic low-genus examples, the algebra generated by the classes $e_i$ are the only known elements of $H^*(\Mod_g)$. In \cite{kawazumi}, N. Kawazumi introduced a generalization of the MMM classes, defining classes $m_{ij} \in H^{2i + j - 2}(\Mod_{g,*}; H_1^{\otimes j})$, specializing to $m_{i,0} =e_{i-1}$. Again, see Definition \ref{definition:mij}. 

The content of Theorem \ref{theorem:A} below is that the cup product form on the total space $E$ gives a characteristic class for surface bundles. Theorem \ref{theorem:A} also gives an ``intrinsic meaning'' to the twisted MMM class $m_{0,k}$ in much the same way that the first MMM class $e_1 \in H^2(\Mod_g)$ has an interpretation as the signature of the total space of a surface bundle over a surface (see \cite[Proposition 4.11]{moritabook}).

\begin{maintheorem}[Cup product as characteristic class]\label{theorem:A}
For all $k\ge 2$ and $g \ge 2$, the twisted MMM class $m_{0,k} \in H^{k-2}(\Mod_{g,*}; \wedge^k H_1)$ computes the cup product in surface bundles in the following sense:

Suppose $B$ is a paracompact Hausdorff space and $f: B \to K(\Mod_{g,*},1)$ is a map classifying a surface-bundle-with-section $\pi: E \to B$. Then for all $i \ge 0$ there is a splitting of vector spaces
\begin{equation}\label{equation:splitting}
H^i(E) \cong H^{i-2}(B)  \oplus H^{i-1}(B; H_1) \oplus H^i(B).
\end{equation}

Let $\epsilon: H^{i-1}(B; H_1) \to H^i(E)$ denote the inclusion associated to this splitting. For $1 \le i \le k$ and any $d_1, \dots, d_k$, let $x_i \in H^{d_i-1}(B; H_1)$; for convenience set $D := \sum d_i$. Then there are the following expressions for the components of the product $\epsilon(x_1) \dots \epsilon(x_k) \in H^D(E)$ in the splitting (\ref{equation:splitting}):
\begin{align}
\label{equation:comp1}H^{D-2}(B)\mbox{- component:} & \quad & (-1)^\gamma m_{0,k}\lrcorner(x_1, \dots , x_k)\\
\label{equation:comp2}H^{D-1}(B; H_1)\mbox{- component:} & \quad & (-1)^{\gamma + 1}\epsilon(m_{0,k+1}\lrcorner(x_1, \dots, x_k))\\
\label{equation:comp3}H^D(B)\mbox{- component:} 	& \quad & 0
\end{align}
(see Definition \ref{definition:intprod} for the meaning of $m_{0,j}\lrcorner(x_1,  \dots,  x_k)$, and Equation (\ref{equation:gamma}) for the definition of $\gamma$).
\end{maintheorem}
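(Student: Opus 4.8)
The plan is to reduce everything to the tautological classes and the formula $m_{0,j}=\pi_*(\omega^{\cup j})$. Let $\pi\colon E\to B$ be the given bundle, $s$ its section, and $\pi_*$ the Gysin map (integration over the fibre). Recall the tautological twisted class $\omega\in H^1(E;H_1)$, characterised by restricting to $\id\in\Hom(H_1,H_1)$ on each fibre and by $s^*\omega=0$ (existence and uniqueness hold because a section forces the rational Serre spectral sequence of $\pi$ to degenerate and $\Hom(H_1,H_1)^{\Mod_{g,*}}=\Q$), and recall from Definition \ref{definition:mij} that $m_{0,j}=\pi_*(\omega^{\cup j})\in H^{j-2}(\Mod_{g,*};\wedge^jH_1)$ — the wedge, rather than a full tensor power, is forced because $\omega$ has odd cohomological degree, so transposing two of its $H_1$-slots costs a sign. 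Let $u\in H^2(E)$ be the image of $1$ under the top-summand inclusion of \eqref{equation:splitting} in degree $2$; equivalently $u$ restricts to the positive generator on each fibre and satisfies $s^*u=0$ and $\pi_*(\omega\cup u)=0$. One checks, à la Leray–Hirsch, that \eqref{equation:splitting} is the splitting whose three summands are the images of $\pi^*(-)$, $\epsilon(-):=\langle\omega,\pi^*(-)\rangle$ (contracting the two $H_1$-factors by the symplectic form), and $u\cup\pi^*(-)$, and that the corresponding projections are $s^*$, $c^{-1}\pi_*(\omega\cup-)$ for the constant $c\in\Q^\times$ determined by $\langle m_{0,2},-\rangle=c\cdot\id_{H_1}$, and $\pi_*$. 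Since all of these data are natural under pullback of bundles, it suffices to work with $\pi\colon E\to B$ directly.

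The conceptual heart is the single identity
\[
\omega^{\cup k}\;=\;u\cup\pi^*m_{0,k}\;+\;(-1)^{\delta}\,\epsilon\bigl(m_{0,k+1}\bigr)\qquad\text{in }H^k(E;\wedge^kH_1),
\]
for a sign $(-1)^\delta$ to be tracked, with no $\pi^*H^k(B;\wedge^kH_1)$-summand. This is proved by decomposing $\omega^{\cup k}$ along the (twisted-coefficient) splitting described above: the $\pi^*$-component is $s^*(\omega^{\cup k})=(s^*\omega)^{\cup k}=0$; the $u\cup\pi^*(-)$-component is $\pi_*(\omega^{\cup k})=m_{0,k}$ by definition; and the $\epsilon$-component is $c^{-1}\pi_*(\omega\cup\omega^{\cup k})=c^{-1}\pi_*(\omega^{\cup(k+1)})=c^{-1}m_{0,k+1}$, a nonzero multiple of $m_{0,k+1}$.

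Now expand $P:=\epsilon(x_1)\cdots\epsilon(x_k)$. After the $k$ symplectic contractions, each $\epsilon(x_i)$ is an ordinary (untwisted) class of degree $d_i$, so graded-commutativity of cup product lets us slide all of the $\omega$'s to the left; the accumulated Koszul sign is by definition $(-1)^\gamma$ (Equation \eqref{equation:gamma}), and $P=(-1)^\gamma\,\langle\!\langle\,\omega^{\cup k},\,\pi^*(x_1\cup\cdots\cup x_k)\,\rangle\!\rangle$, where $\langle\!\langle\,,\rangle\!\rangle$ pairs the $i$-th $H_1$-slot of $\omega^{\cup k}$ against the one coming from $x_i$. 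Substituting the identity of the previous paragraph and using that $\pi^*$, cup product, $\epsilon$, and the contractions are mutually compatible, $P$ becomes the sum of $(-1)^\gamma\,u\cup\pi^*\bigl(m_{0,k}\lrcorner(x_1,\dots,x_k)\bigr)$ and $(-1)^{\gamma+\delta}\,\epsilon\bigl(m_{0,k+1}\lrcorner(x_1,\dots,x_k)\bigr)$ — here one reads off Definition \ref{definition:intprod} of $\lrcorner$ as "$\pi_*$ followed by the symplectic contractions on $B$" — and there is no $\pi^*H^D(B)$-term. Reading off the three summands of \eqref{equation:splitting} yields \eqref{equation:comp1}, then \eqref{equation:comp2} (after checking $\gamma+\delta\equiv\gamma+1\pmod 2$), and \eqref{equation:comp3}. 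Equivalently, one can skip the identity and compute the three components of $P$ directly: $s^*P=\prod_i s^*\epsilon(x_i)=0$ since $s^*\epsilon(x_i)=\langle s^*\omega,x_i\rangle=0$; $\pi_*P=(-1)^\gamma\langle\!\langle m_{0,k},x_1\cup\cdots\cup x_k\rangle\!\rangle=(-1)^\gamma\,m_{0,k}\lrcorner(x_1,\dots,x_k)$ by the projection formula and naturality of $\pi_*$ in the coefficients; and $c^{-1}\pi_*(\omega\cup P)=(-1)^\gamma c^{-1}\,m_{0,k+1}\lrcorner(x_1,\dots,x_k)$, where one uses $\pi_*(\omega\cup\pi^*b)=(\pi_*\omega)\cup b=0$ and $\pi_*(\omega\cup u\cup\pi^*a)=\pi_*(\omega\cup u)\cup a=0$ to see that $c^{-1}\pi_*(\omega\cup-)$ really is the projection onto the middle summand.

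I expect the only genuine difficulty to be bookkeeping. Conceptually the statement is "$m_{0,k}=\pi_*\omega^{\cup k}$, now push and pull", but one must verify that the Koszul reordering, the antisymmetry of the symplectic pairing, the passage between $H_1^{\otimes k}$ and $\wedge^kH_1$, and the ordering conventions built into Definition \ref{definition:intprod} conspire to produce exactly $(-1)^\gamma$ in \eqref{equation:comp1} and exactly $(-1)^{\gamma+1}$ in \eqref{equation:comp2} (in particular, to compute $\delta$ and $c$ precisely); this is the error-prone part. The one other point needing care — though routine — is the claim in the first paragraph that $\pi^*(-)$, $\epsilon$, $u\cup\pi^*(-)$ induce exactly the splitting \eqref{equation:splitting} with projections $s^*$, $c^{-1}\pi_*(\omega\cup-)$, $\pi_*$: this is a Leray–Hirsch argument resting on the degeneration of the Serre spectral sequence (forced by the section) together with $(H^1\otimes H_1)^{\Mod_{g,*}}=\Q$, which is where the hypothesis $g\ge 2$ enters.
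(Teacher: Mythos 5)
Your proposal is correct and follows essentially the same route as the paper: the splitting and its projections $\pi_!$, $-\pi_!(\,\cdot\,k_0)$, $\sigma^*$ are exactly the Kawazumi--Morita machinery (Proposition \ref{proposition:km52} and Theorem \ref{theorem:km51}, with your $\omega$ and $u$ being $k_0$ and $\nu'$), your ``conceptual heart'' identity is the splitting formula (\ref{equation:splitformula}) applied to $k_0^k$, and your ``direct'' computation of the three components via $\sigma^*P=0$, the projection formula for $\pi_!P$, and $\pi_!(P k_0)$ is the paper's argument, including the interlacing sign $(-1)^\gamma$. The only content you defer --- the exact values of $\delta$ and $c$ and the slot-ordering in Definition \ref{definition:intprod} --- is precisely the bookkeeping the paper carries out via $T_{n+k,k}$ and Proposition \ref{proposition:km52}, and it works out as you anticipate.
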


The line of thought culminating in Theorem \ref{theorem:A} begins with D. Sullivan \cite{sullivan}, who showed that every element of $\wedge^3 V$ (for $V$ an arbitrary finitely generated torsion-free $\Z$-module) arises as the cup product form $\wedge^3 H^1(M; \Z) \to \Z$ for some $3$-manifold $M$. Johnson \cite{johnsonhom} incorporated some of these ideas in his far-reaching theory of the Johnson homomorphism $\tau: H_1(\I_{g,*}) \to \wedge^3 H_1$, one definition of which is by means of the cup product form in a $3$-manifold fibering as a surface bundle over $S^1$. 

In one direction, the Johnson homomorphism was generalized by S. Morita \cite{moritaextension}, who constructed an extension of $\tau$ by means of a class $\tilde k \in H^1(\Mod_{g,*}; \wedge^3 H_1)$ restricting to $\tau$ on $\I_{g,*}$. In \cite{moritalinear96}, he showed that all of the MMM classes $e_i$ can be expressed in terms of $\tilde k$.

Another generalization of the Johnson homomorphism was given by Johnson himself \cite{johnsonsurvey}, who gave a definition of ``higher Johnson invariants'' $\tau_k: H_k(\I_{g,*}) \to \wedge^{k+2}H_1$ (see Definition \ref{definition:tauk}), but his definition was formulated as a generalization of a different method of constructing the Johnson homomorphism. Theorem \ref{theorem:B} below can be viewed as a synthesis of Morita's and Johnson's perspectives, in that it shows that the twisted MMM classes $m_{0,k}$ restrict on $\I_{g,*}$ to (a multiple of) $\tau_{k-2}$.

\begin{maintheorem}[Extending the higher Johnson invariants]\label{theorem:B}
There is an equality for all $g \ge 2$ and $k \ge 2$
\[
m_{0,k} = (-1)^{k} k!\ \tau_{k-2}
\]
as elements of $H^{k-2}(\I_{g,*}; \wedge^k H_1) \cong \Hom(H_{k-2}(\I_{g,*}; \Q), \wedge^k H_1)$. 
\end{maintheorem}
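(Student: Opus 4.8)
The plan is to deduce Theorem \ref{theorem:B} from Theorem \ref{theorem:A} by running the cup-product machinery over the specific base that computes the higher Johnson invariants, and then matching the output of \eqref{equation:comp1} against Johnson's definition of $\tau_{k-2}$. First I would recall (from Definition \ref{definition:tauk}) that $\tau_{k-2} \in \Hom(H_{k-2}(\I_{g,*}), \wedge^k H_1)$ is built from the cup product form on surface bundles whose monodromy lies in $\I_{g,*}$: for a cycle represented by a map $f\colon B \to K(\I_{g,*},1)$ with $B$ a closed oriented $(k-2)$-manifold (or more generally for the universal situation), the total space $E \to B$ has the property that the local system $H_1$ is trivialized up to the appropriate filtration, so that the classes $\epsilon(x_i)$ for $x_i \in H^0(B; H_1) = H_1$ are genuine cohomology classes in $H^1(E)$ independent of $B$, and $\tau_{k-2}$ records exactly the $H^{k-2}(B)$-component of the $k$-fold product $\epsilon(x_1)\cdots\epsilon(x_k)$, dualized against $H_{k-2}(B)$. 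The key point is that this is precisely the quantity that Theorem \ref{theorem:A}, equation \eqref{equation:comp1}, computes as $(-1)^\gamma\, m_{0,k}\lrcorner(x_1,\dots,x_k)$ when each $d_i = 1$, so that $D = k$ and $D-2 = k-2 = \dim B$.

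The second step is bookkeeping with the pairing $\lrcorner$ and the sign $\gamma$. With all $d_i = 1$ the classes $x_i$ live in $H^0(B;H_1)$, i.e.\ they are just elements $v_i \in H_1$, and $m_{0,k}\lrcorner(v_1,\dots,v_k)$ is (by Definition \ref{definition:intprod}) the contraction of the $\wedge^k H_1$-valued class $m_{0,k} \in H^{k-2}(\Mod_{g,*}; \wedge^k H_1)$ against $v_1 \wedge \cdots \wedge v_k \in \wedge^k H_1$ using the symplectic pairing, restricted to $\I_{g,*}$. Since $m_{0,k}|_{\I_{g,*}} \in \Hom(H_{k-2}(\I_{g,*}), \wedge^k H_1)$, this contraction against all choices of $v_1,\dots,v_k$ determines and is determined by $m_{0,k}|_{\I_{g,*}}$ itself (the symplectic pairing on $H_1$ is nondegenerate, so the induced pairing $\wedge^k H_1 \otimes \wedge^k H_1 \to \Q$ is nondegenerate). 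Thus comparing the two expressions for the $H^{k-2}(B)$-component shows $m_{0,k}|_{\I_{g,*}}$ and $\tau_{k-2}$ agree up to the scalar $(-1)^\gamma$ together with whatever combinatorial factor is built into Johnson's normalization of $\tau_{k-2}$ versus the normalization of $\lrcorner$. The claimed factor $(-1)^k k!$ should fall out: the $k!$ is the standard discrepancy between an alternating multilinear form on $k$ vectors and the corresponding element of $\wedge^k$, and one evaluates $\gamma$ from Equation \eqref{equation:gamma} with $d_1 = \cdots = d_k = 1$ to pin down the sign as $(-1)^k$.

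I expect the main obstacle to be the precise reconciliation of normalizations and signs — in particular, extracting Johnson's $\tau_{k-2}$ from his original definition in \cite{johnsonsurvey} (phrased via a different construction of the Johnson homomorphism) and rewriting it in the cup-product language of Theorem \ref{theorem:A}, so that the comparison is literally between the same pairing applied to the same classes. This requires showing that Johnson's higher invariant really is computed by the $H^{k-2}(B)$-component of $\epsilon(v_1)\cdots\epsilon(v_k)$ and not, say, by some other component or a variant with a shifted filtration; once that identification is in hand, the numerical factor $(-1)^k k!$ is a finite sign-and-factorial computation. A secondary point to be careful about is that Theorem \ref{theorem:A} is stated for a classifying map to $K(\Mod_{g,*},1)$, so to access $\I_{g,*}$ one restricts along $K(\I_{g,*},1) \to K(\Mod_{g,*},1)$ and must check that the splitting \eqref{equation:splitting} is compatible with this restriction and that $\epsilon$ of a class in $H^{i-1}(B;H_1)$ pulled back from a constant local system behaves as expected — this is routine naturality of the Leray--Hirsch/Serre spectral sequence argument underlying \eqref{equation:splitting}, but it is where one cashes in the hypothesis that the monodromy is Torelli.
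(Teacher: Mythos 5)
Your overall strategy coincides with the paper's: restrict Theorem \ref{theorem:A} to the case $d_1 = \dots = d_k = 1$ over a $(k-2)$-cycle in $K(\I_{g,*},1)$, identify the $H^{k-2}(B)$-component of $\epsilon(a_1)\cdots\epsilon(a_k)$ with the pairing of $\tau_{k-2}[B]$ against $a_1\wedge\dots\wedge a_k$, and extract the factor $k!$ from the discrepancy between the tensor-valued class $m_{0,k}$ (valued in $L(\wedge^k H_1)$, paired via $C_k$) and the exterior-power normalization of $\tau_{k-2}$ (paired via $C'_k$). However, the step you flag as ``the main obstacle'' --- showing that Johnson's $\tau_{k-2}$, which is \emph{defined} as $p_*J_*[E] \in H_{k+2}(T^{2g}) \cong \wedge^{k+2}H_1$ via the Jacobian fibration, is computed by $\pair{\epsilon(a_1)\cdots\epsilon(a_k),[E]}$ --- is not an obstacle to be deferred; it is the core of the proof and is entirely missing from your proposal. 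In the paper this is Proposition \ref{proposition:johnsoncup}, and its proof requires the concrete identification $\epsilon(w) = J^*p^*w^\vee$ for $w \in H_1$. That identification is established by (a) using Lemma \ref{lemma:epschar} ($\im\epsilon = \ker\pi_! \cap \ker\sigma^*$) to see that $J^*p^*w^\vee$ lies in $\im\epsilon$ (degree reasons give $\pi_! = 0$; the section maps to the basepoint of $T^{2g}$ gives $\sigma^* = 0$), and (b) restricting to a fiber and using $\iota^*k_0 = \id$ to pin down which element of $H_1$ it is $\epsilon$ of. Without this, there is no bridge between the spectral-sequence class $\epsilon(w)$ and the Jacobian construction, and the theorem does not follow.

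A secondary but genuine error: you claim the sign $(-1)^k$ is ``pinned down'' by evaluating $\gamma$ with all $d_i = 1$. In fact Equation (\ref{equation:gamma}) gives $\gamma = \sum_{i=1}^{k-1}(k-i)(d_i-1) = 0$ in this case, so $(-1)^\gamma = +1$ and contributes nothing. The sign $(-1)^k$ arises instead from the convention $w^\vee(u) = \mu(u \otimes w)$: under the embedding $\wedge^k(\cdot^\vee)^{-1}\otimes\id$, the evaluation pairing $H^k(T^{2g}) \otimes H_k(T^{2g}) \to \Q$ corresponds to $(-1)^k C'_k$, not to $C'_k$. This is exactly the kind of normalization you would have had to confront in carrying out Step 1, which is further evidence that the missing step is where the actual content lies.
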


The cases $k = 2,3$ were established by Kawazumi and Morita in \cite{km}. In \cite{churchfarb}, T. Church and B. Farb developed a method for studying the map $\tau_k$. A central component of their computation is the principle that, when viewed as a homomorphism $H_k(\I_{g,*}) \to \wedge^{k+2} H_1$, the Johnson invariant $\tau_k$ is a map of representations of $\Sp(2g, \Q)$. Johnson showed in \cite{johnsonhom} that $\tau = \tau_1$ is a rational isomorphism and in \cite[Question C]{johnsonsurvey}, asked if the same was true for all $\tau_k$. In \cite{hain}, R. Hain showed that $\tau_2$ was not injective. Church and Farb later used their methods to show that $\tau_k$ is not injective for any $2 \le k < g$. This leaves the question of surjectivity of $\tau_k$ as an unresolved aspect of the theory of the cohomology of $\I_{g,*}$. Church and Farb showed that $\tau_2: H_2(\I_{g,*}) \to \wedge^4 H_1$ is a surjection, but did not address higher $k$, or the behavior of $\tau_2$ on $\I_g^1$. 

In the following theorem, we show that the question of surjectivity of $\tau_k$ (when pulled back to $\I_{g}^1$) is intimately related to another well-known open question about the homology of the Torelli group. It is well-known (see, e.g. the introduction to \cite{moritalinear96}) that the MMM classes $e_{2i+1}$ of {\it odd} index vanish when restricted to $\I_g$. However, the behavior of the {\it even}-index classes $e_{2i}$ on $\I_g$ is completely unknown. 

\begin{maintheorem}[Higher Johnson invariants detect MMM classes]\label{theorem:C}
For all $i$, the restriction of $e_{i}$ to $H^{2i}(\I_g^1; \Q)$ is nonzero if and only if the $\Sp(2g, \Q)$-representation $\im(\tau_{2i}: H_{2i}(\I_g^1) \to \wedge^{2i+2}H_1)$ contains a copy of the trivial representation $V(\lambda_0)$. 
\end{maintheorem}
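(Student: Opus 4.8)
The plan is to reduce Theorem \ref{theorem:C} to the identity
\[
\langle e_i|_{\I_g^1},\, \alpha\rangle \;=\; c_i\cdot\bigl(\tau_{2i}(\alpha)\lrcorner \omega^{\,i+1}\bigr),\qquad \alpha \in H_{2i}(\I_g^1;\Q),
\]
where $\omega \in \wedge^2 H_1$ is the symplectic form, $(-)\lrcorner\omega^{i+1}$ denotes complete contraction of an element of $\wedge^{2i+2}H_1$ against $\omega^{i+1}$, and $c_i$ is a nonzero rational. Granting this, the theorem is representation theory: by the principle of Church and Farb \cite{churchfarb}, $\tau_{2i}$ is a map of $\Sp(2g,\Q)$-modules (the evident $\Sp(2g,\Z)$-equivariance upgrades by Zariski density), so $\im(\tau_{2i})$ is an $\Sp(2g,\Q)$-subrepresentation of $\wedge^{2i+2}H_1$. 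When $2i+2\le 2g$, the trivial representation $V(\lambda_0)$ occurs in $\wedge^{2i+2}H_1$ with multiplicity one, is spanned by $\omega^{i+1}$, and $(-)\lrcorner\omega^{i+1}$ is a nonzero multiple of the projection onto this line; hence the functional $\alpha\mapsto\tau_{2i}(\alpha)\lrcorner\omega^{i+1}$ is nonzero precisely when $\im(\tau_{2i})$ is not contained in the sum of the nontrivial isotypic components, i.e. precisely when $V(\lambda_0)\subseteq\im(\tau_{2i})$. When $2i+2>2g$ one has $\wedge^{2i+2}H_1=0$, so $\im(\tau_{2i})$ contains no copy of $V(\lambda_0)$, while the identity forces $e_i|_{\I_g^1}=0$; both sides of Theorem \ref{theorem:C} fail.

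To establish the identity I would compute geometrically with Theorem \ref{theorem:A}. Represent $\alpha$, up to a nonzero scalar, by $f\colon B\to K(\I_g^1,1)$ with $B$ a closed oriented $2i$-manifold, and let $\pi\colon E\to B$ be the classified surface-bundle-with-section, $s\colon B\to E$ its section, $e_v\in H^2(E)$ the vertical Euler class; then $\langle f^*e_i,[B]\rangle = \langle e_v^{\,i+1},[E]\rangle$. The decisive point — and the reason the theorem concerns $\I_g^1$ rather than $\I_{g,*}$ — is that the $\psi$-class $s^*e_v$ vanishes on $\Mod_g^1$: a mapping class of $\Sigma_g^1$ fixes a tangent frame at a point of the capping disk, so the relevant line bundle over $B\Mod_g^1$ is trivial. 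Hence $s^*e_v=0$, and by the section-adapted form of the splitting (\ref{equation:splitting}) one gets $e_v = (2-2g)\,s_!1 + \epsilon(\xi)$ on $E$ for some $\xi\in H^1(B;H_1)$, \emph{with no $\pi^*H^2(B)$ term} (the $\pi^*H^2(B)$-component of $e_v$ equals $s^*e_v=0$). Since $s^*(s_!1)=s^*e_v=0$ and $s^*\epsilon(\xi)=0$, the projection formula shows that every term of the binomial expansion of $e_v^{\,i+1}$ containing a factor $s_!1$ is annihilated by $\langle\,\cdot\,,[E]\rangle$; therefore $\langle e_v^{\,i+1},[E]\rangle = \langle\epsilon(\xi)^{i+1},[E]\rangle$, which Theorem \ref{theorem:A} evaluates as $\pm\langle m_{0,i+1}\lrcorner(\xi,\dots,\xi),[B]\rangle$. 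As $B$ and $f$ vary this pairing determines the class $e_i|_{\Mod_g^1}$ (rational cohomology being detected by maps from closed oriented manifolds), yielding the universal formula $e_i|_{\Mod_g^1} = \pm\, m_{0,i+1}\lrcorner\kappa^{\cup(i+1)}$ in $H^{2i}(\Mod_g^1;\Q)$, where $\kappa\in H^1(\Mod_g^1;H_1)$ is the ``non-diagonal part of the Euler class'' defined by $\epsilon(\kappa)=e_v-(2-2g)\,s_!1$ on the universal bundle.

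Finally one must identify $m_{0,i+1}\lrcorner\kappa^{\cup(i+1)}$, restricted to $\I_g^1$, with a nonzero multiple of $m_{0,2i+2}\lrcorner\omega^{i+1}$ — whence, by Theorem \ref{theorem:B}, with a nonzero multiple of $\tau_{2i}(-)\lrcorner\omega^{i+1}$ — and this is the step I expect to be the main obstacle. Theorem \ref{theorem:B} gives $m_{0,i+1}|_{\I_{g,*}} = (-1)^{i+1}(i+1)!\,\tau_{i-1}$, and since $H^1(\Mod_{g,*};H_1)\cong\Q$ (Morita \cite{moritaextension}) one identifies $\kappa$, up to a nonzero scalar, with $m_{0,3}\lrcorner\omega$; thus $\kappa|_{\I_{g,*}}$ is a nonzero multiple of the Johnson contraction $\wedge^3H_1\to H_1$ precomposed with $\tau_1$. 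The structural input then needed is a multiplicativity statement: the identities among the $m_{0,k}$ that drop out of the two-factor case of Theorem \ref{theorem:A} — expressing $m_{0,k}\lrcorner(-)$ as an iterate of $m_{0,2}\lrcorner$ and $m_{0,3}\lrcorner$ — together with the cup-product description of the higher Johnson invariants underlying \cite{churchfarb}, should let one recognize $m_{0,2i+2}\lrcorner\omega^{i+1}$ inside the cup-product expression built from $\tau_{i-1}$ and $i+1$ copies of the contracted $\tau_1$. Once this matching is in place the rest — tracking signs and checking that the accumulated rational constants (notably $\kappa|_{\I_g^1}\ne 0$, classically due to Morita) do not vanish — is routine; the conceptual content lies entirely in the vanishing of $\psi$ on $\Mod_g^1$ and in Theorems \ref{theorem:A} and \ref{theorem:B}.
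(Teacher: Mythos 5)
Your overall architecture---reduce the theorem to the identity $e_i|_{\I_g^1}=c_i\,\mu^{\otimes(i+1)}_*(\tau_{2i})$ with $c_i\neq 0$, then observe that the unique $\Sp(2g,\Q)$-invariant functional on $\wedge^{2i+2}H_1$ is nonzero on a subrepresentation exactly when that subrepresentation contains $V(\lambda_0)$---is the same as the paper's, and your representation-theoretic endgame is correct. Your geometric reduction is also sound as far as it goes, and is a genuinely different waypoint from anything in the paper: $\sigma^*e_v$ does vanish on $\Mod_g^1$, Proposition \ref{proposition:km52} then gives $e_v=(2-2g)\nu'+\epsilon(\kappa)$ with no $\pi^*H^2$-term, the cross terms in $e_v^{i+1}$ die because $\pi_!(\nu\,\epsilon(\kappa)^m)=\sigma^*(\epsilon(\kappa)^m)=0$ and $e^j=0$, and Theorem \ref{theorem:A} converts $\pi_!(\epsilon(\kappa)^{i+1})$ into $\pm\, m_{0,i+1}\lrcorner(\kappa,\dots,\kappa)$.

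The gap is exactly where you flag it, and it is not a routine one. You never prove that $m_{0,i+1}\lrcorner\kappa^{\cup(i+1)}$, restricted to $\I_g^1$, is a nonzero multiple of $\mu^{\otimes(i+1)}_*(m_{0,2i+2})$. The appeal to ``identities among the $m_{0,k}$ that drop out of the two-factor case of Theorem \ref{theorem:A}'' is not an argument: iterating Theorem \ref{theorem:A} two factors at a time contaminates the recursion with the $H^{*}(B;H_1)$-components (the $m_{0,k+1}$ terms of (\ref{equation:comp2})), so expressing the full contraction of $m_{0,2i+2}$ as an iterate of lower contractions is real combinatorial work that you have not done. Without it, what you have proved is that $e_i|_{\I_g^1}$ is a contraction of a cup product built from $\tau_{i-1}$ and copies of the contracted $\tau_1$---which is not yet a statement about $\im(\tau_{2i})$, the object the theorem concerns. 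The paper sidesteps all of this with one computation: from Kawazumi--Morita's identity $\mu_*(k_0^2)=2\nu-e-\bar e$ one gets
\[
\mu^{\otimes(i+1)}_*(m_{0,2i+2})=\pi_!\bigl((2\nu-e-\bar e)^{i+1}\bigr),
\]
and since $\pi_!(2\nu X)=2\sigma^*X$ while $\sigma^*(2\nu-e-\bar e)=0$, the expression telescopes to $(-1)^{i}2^{i+1}e^{i}+(-1)^{i+1}\sum_{j}\binom{i+1}{j}e^{i+1-j}e_{j-1}$, which on $\I_g^1$ (where $e=0$) collapses to $(-1)^{i+1}e_i$. Replacing your final step by this computation is the shortest repair; otherwise you must actually establish the multiplicative relations among the $m_{0,k}$ that you allude to, which is work of comparable difficulty to the theorem itself.
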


The primary case of interest is of course $i$ even, but as a corollary of Theorem \ref{theorem:C} and the vanishing of $e_{2i-1}$ on $\I_g^1$, it follows that for all $i \ge 1$, the map $\tau_{4i-2}: H_{4i-2}(\I_g^1) \to \wedge^{4i}H_1$ fails to contain a copy of $V(\lambda_0)$, even though $\wedge^{4i}H_1$ always does. This gives a partial resolution of Johnson's question.

\begin{maintheorem}[Non-surjectivity of $\tau_{4k-2}$]\label{theorem:nonsurj}
For all $k \ge 1$, the map
\[
\tau_{4k-2}: H_{4k-2}(\I_g^1) \to \wedge^{4k} H_1
\]
is not surjective.
\end{maintheorem}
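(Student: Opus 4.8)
The plan is to deduce Theorem~\ref{theorem:nonsurj} from Theorem~\ref{theorem:C} together with the classical vanishing of the odd-index MMM classes on the Torelli group. I would apply Theorem~\ref{theorem:C} with $i = 2k-1$, so that $2i = 4k-2$: the assertion is then that the restriction of $e_{2k-1}$ to $H^{4k-2}(\I_g^1;\Q)$ is nonzero if and only if $\im(\tau_{4k-2}: H_{4k-2}(\I_g^1) \to \wedge^{4k}H_1)$ contains a copy of the trivial representation $V(\lambda_0)$.

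The first step is to check that $e_{2k-1}$ restricts to zero on $\I_g^1$. Since $2k-1$ is odd, $e_{2k-1}$ restricts to zero on $\I_g$ by the well-known computation recalled in the introduction; and the restriction of $e_{2k-1} \in H^{4k-2}(\Mod_g)$ to $\I_g^1$ factors through its restriction to $\I_g$ via the natural maps $\I_g^1 \to \I_{g,*} \to \I_g \hookrightarrow \Mod_g$, so it too vanishes. Theorem~\ref{theorem:C} then forces $\im(\tau_{4k-2})$ to contain no copy of $V(\lambda_0)$.

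To conclude non-surjectivity it remains to observe that $\wedge^{4k}H_1$ \emph{does} contain a copy of $V(\lambda_0)$ --- for instance the line spanned by $\omega^{2k}$, where $\omega \in \wedge^2 H_1$ is the symplectic form, which is nonzero as soon as $g \ge 2k$ (when $g < 2k$ the target $\wedge^{4k}H_1$ vanishes and the statement is empty). Were $\tau_{4k-2}$ surjective, its image $\wedge^{4k}H_1$ would contain this copy of $V(\lambda_0)$, contradicting the previous paragraph; hence $\tau_{4k-2}$ is not surjective.

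I do not expect any genuine obstacle here once Theorem~\ref{theorem:C} is available. The one point that deserves a line of justification is that the vanishing input required by Theorem~\ref{theorem:C} --- the vanishing of the odd class $e_{2k-1}$ --- holds on $\I_g^1$ and not merely on $\I_g$; this is immediate because the MMM classes on all flavours of the Torelli group are pulled back from $\Mod_g$, so the computation over $\I_g$ propagates along $\I_g^1 \to \I_g$.
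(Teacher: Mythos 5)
Your proof is correct and is essentially identical to the paper's own argument: the paper derives Theorem~\ref{theorem:nonsurj} as an immediate corollary of Theorem~\ref{theorem:C} applied to the odd-index class $e_{2k-1}$, whose vanishing on $\I_g^1$ forces $\im(\tau_{4k-2})$ to contain no copy of $V(\lambda_0)$ while $\wedge^{4k}H_1$ does contain one. Your additional remarks (that the vanishing on $\I_g$ pulls back to $\I_g^1$, and that $\omega^{2k}$ spans the requisite trivial subrepresentation provided $g \ge 2k$) only make the argument more careful than the paper's one-line deduction.
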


As an application of Theorems \ref{theorem:A} and \ref{theorem:B}, we obtain some results concerning the topology of surface bundles. If $\pi: E \to B$ is a $\Sigma_g$-bundle with monodromy contained in $\I_g$, it is well-known that $H_*(E) \cong H_*(B) \otimes H_*(\Sigma_g)$, an isomorphism of graded vector spaces (see Section \ref{subsection:SB} for the relevant terminology). Briefly put, surface bundles with Torelli monodromy are ``homology products''. In general, the additive isomorphism $H^*(E) \cong H^*(B) \otimes H^*(\Sigma_g)$ is very far from being an isomorphism of rings, as the rich theory of the Johnson homomorphism attests to.  For individual elements $\phi \in \Mod_g$, it is well-understood when a mapping torus $\pi: M_\phi^3 \to S^1$ satisfies a multiplicative isomorphism $H^*(M_\phi) \cong H^*(S^1) \otimes H^*(\Sigma_g)$: such an isomorphism holds if and only if $\phi \in \K_g$, the so-called {\em Johnson kernel} (see the beginning of Section \ref{section:applications} and in particular (\ref{equation:johnsonkernel})). However, if $\pi: E \to B$ is a $\Sigma_g$-bundle over a higher-dimensional $B$ with monodromy contained in $\mathcal K_g$, it is not {\em a priori} clear whether a multiplicative isomorphism $H^*(E) \cong H^*(B) \otimes H^*(\Sigma_g)$ must hold. We show that this is the case.

\begin{maintheorem}[K\"unneth formula]\label{theorem:jkbundles}
Let $\pi: E \to B$ be a $\Sigma_g$-bundle over a paracompact Hausdorff space $B$ with monodromy $\rho: \pi_1 B \to \K_{g,*}$ contained in the Johnson kernel. Then there is an isomorphism of rings
\[
H^*(E) \cong H^*(B) \otimes H^*(\Sigma_g).
\]
\end{maintheorem}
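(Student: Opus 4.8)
The plan is to show that a bundle $\pi:E\to B$ with monodromy in $\K_{g,*}$ has trivial cup products among the "middle" classes, i.e.\ those lying in the summand $H^{\ast-1}(B;H_1)$ of the splitting (\ref{equation:splitting}), and then to assemble this into a ring isomorphism using the standard Leray--Hirsch / Serre spectral sequence machinery. The key input will be Theorem \ref{theorem:A}: for classes $x_i\in H^{d_i-1}(B;H_1)$ the product $\epsilon(x_1)\cdots\epsilon(x_k)$ is controlled by $m_{0,k}\lrcorner(x_1,\dots,x_k)$ and $m_{0,k+1}\lrcorner(x_1,\dots,x_k)$. By Theorem \ref{theorem:B}, $m_{0,k}$ restricted to $\I_{g,*}$ is a multiple of the higher Johnson invariant $\tau_{k-2}$, and since $\K_{g,*}$ is precisely the kernel of $\tau_1$, one expects $\tau_j$ to vanish on $\K_{g,*}$ for all $j$ — this is exactly the "vanishing of all $\tau_i$ on $\K_{g,*}$" advertised in the abstract, so I will assume it has been established by this point in the paper. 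Pulling this vanishing back along $\pi_1 B\to\K_{g,*}$ and using naturality of $m_{0,k}$ gives that the pulled-back classes $f^\ast m_{0,k}$ and $f^\ast m_{0,k+1}$ vanish in the relevant twisted cohomology of $B$, hence every $\epsilon(x_1)\cdots\epsilon(x_k)$ is zero.

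Next I would package the cohomology of $E$ as an algebra. Write $H^\ast(\Sigma_g)=\Q\cdot 1\oplus H^1\oplus \Q\cdot[\Sigma_g]$ where $[\Sigma_g]\in H^2$ is the fundamental class. Because the monodromy is in $\K_{g,*}\le\I_{g,*}$, the Serre spectral sequence of $\pi$ degenerates at $E_2$ (the local system $H^1$ is trivial over $B$, as Torelli monodromy acts trivially on $H_\ast(\Sigma_g)$), giving the additive splitting $H^\ast(E)\cong H^\ast(B)\otimes H^\ast(\Sigma_g)$; this is compatible with the three-term splitting (\ref{equation:splitting}), where the first summand corresponds to the $\Q\cdot[\Sigma_g]$-part (degree shift $2$), the middle to the $H^1$-part, and the last to $\Q\cdot 1$. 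It remains to check that the obvious multiplication map $H^\ast(B)\otimes H^\ast(\Sigma_g)\to H^\ast(E)$ sending $b\otimes 1\mapsto \pi^\ast b$, $b\otimes h\mapsto \pi^\ast b\cup\epsilon(h)$ (suitably interpreted), and $b\otimes[\Sigma_g]\mapsto$ the evident class in the $H^{\ast-2}(B)$-summand, is a ring homomorphism. The only nontrivial relations to verify are: (i) $\epsilon(h_1)\cup\epsilon(h_2)$ equals its expected value $\langle h_1,h_2\rangle\,\pi^\ast[\text{pt-class}]$ coming from the intersection pairing on $\Sigma_g$ — i.e.\ the $H^{\ast-2}(B)$-component of a product of two middle classes is just the algebraic intersection number, with no correction term, which is precisely the $k=2$ case of Theorem \ref{theorem:A} combined with $f^\ast m_{0,2}=0$ (note $m_{0,2}$ is already the fiberwise intersection form, so "vanishing" here should be read as: the $\Sp$-equivariant correction built from $\tau_0$ is absent, leaving only the standard symplectic form); and (ii) products of three or more middle classes vanish, which is the general $k\ge 3$ case. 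The $\pi^\ast$-linearity of all these products is automatic from the projection formula and the $H^\ast(B)$-module structure of the spectral sequence.

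The main obstacle I anticipate is bookkeeping the signs and the precise identification of the $k=2$ term. For $k=2$ the class $m_{0,2}$ does not vanish — it is the fiberwise cup-product/intersection form — so I must be careful to state that the relevant quantity is $m_{0,2}$ minus its "Johnson correction", or equivalently that on a $\K_{g,*}$-bundle $m_{0,2}$ reduces to the constant (untwisted) symplectic form pulled back from a point, which is exactly the relation present in a trivial bundle $B\times\Sigma_g$. Concretely I would argue: the class $m_{0,2}\in H^0(\Mod_{g,*};\wedge^2 H_1)$ is $\Mod_{g,*}$-invariant and equals the symplectic form $\omega$, independently of the bundle; its contribution to $\epsilon(x_1)\epsilon(x_2)$ is therefore the same term that appears for the trivial bundle, so no correction is needed. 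For $k\ge 3$, Theorems \ref{theorem:A} and \ref{theorem:B} plus vanishing of $\tau_{k-2}$ on $\K_{g,*}$ kill both components (\ref{equation:comp1}) and (\ref{equation:comp2}) of the $k$-fold product, and (\ref{equation:comp3}) is always zero; an easy downward induction on the number of factors then shows the multiplication map is an isomorphism of rings onto $H^\ast(E)$, since source and target have the same (finite) dimension in each degree by the additive Künneth statement. Finally I would note this also gives the claimed vanishing of all $\tau_i$ when restricted to $\K_{g,*}$ as a consistency check, and remark that paracompact Hausdorff is exactly the hypothesis needed to run the classifying-space argument of Theorem \ref{theorem:A}.
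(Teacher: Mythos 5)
Your argument is circular at its central step. To kill the $k$-fold products of middle classes for $k\ge 3$ you invoke the vanishing of $\tau_{k-2}$ on $\K_{g,*}$ for all $k$, justifying this by noting it is ``advertised in the abstract'' and assuming it has already been established. But that vanishing is Theorem \ref{theorem:jkvanish}, which is deduced \emph{as a corollary of} Theorem \ref{theorem:jkbundles}; the only vanishing available to you at this point is $\tau_1=0$ on $\K_{g,*}$, which is the definition of the Johnson kernel, and it is not \emph{a priori} clear that this forces $\tau_j=0$ for $j\ge 2$. The actual proof only ever uses $m_{0,3}=-6\tau_1=0$ on $\K_{g,*}$ (to kill the $H^{*-1}(B;H_1)$-component of a product of \emph{two} middle classes); the higher products are then handled not by higher Johnson vanishing but by the Thom-class property of Morita's class $\nu$, namely $\pi_!(\nu u)=\sigma^*\!u$ together with $\sigma^*k_0=0$, which forces $\nu\,\epsilon(a)=0$ and hence annihilates everything involving three or more fiber-degree-one factors.

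You also omit the other essential input: the vanishing of the Euler class $e$ on $H^2(\K_{g,*})$, which the paper extracts from Morita's result that $e$ lies in the image of $\rho_1^*$ with $\rho_1$ restricting to the Johnson homomorphism on $\I_{g,*}$. Without this you cannot identify $\nu'$ with $\nu$, nor show $(\nu')^2=0$ and $k_0\nu'=0$; that is, you cannot verify the products involving the $H^{*}(B)\otimes H^2(\Sigma_g)$ summand at all --- your proposal only discusses products of middle classes and never addresses the top class, whose square has no reason to vanish in $E$ without this argument. The correct order of logic is: establish the two-fold product formula and the $\nu$-annihilation statements using only $\tau_1=0$ and $e=0$ on $\K_{g,*}$, obtain the ring isomorphism, and only then deduce the vanishing of all higher $\tau_k$ as a consequence.
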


The case $B = S^1$ is essentially a {\em definition} of $\K_{g,*}$. The case $B = \Sigma_h$ a surface was shown by the author in \cite{salter} by giving an explicit construction of a basis of cycles suitable for computing the intersection product in homology; this was applied to the problem of counting the number of distinct surface bundle structures on $4$-manifolds.\\

A final corollary of this theorem is the vanishing of {\em all} higher Johnson invariants on $\K_{g,*}$. As remarked above, the vanishing of $\tau = \tau_1$ on $\K_{g,*}$ is a definition, but it is not {\em a priori} clear that this implies the vanishing of higher invariants. Nonetheless, the results of the paper combine to show that this is the case.

\begin{maintheorem}[Vanishing of $\tau_k$ on $\K_{g,*}$]\label{theorem:jkvanish}
For each $k \ge 1$, the restriction of $\tau_k \in H^k(\I_{g,*}, \wedge^{k+2}H_1)$ to $\K_{g,*}$ is zero.
\end{maintheorem}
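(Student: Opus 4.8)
The plan is to derive this from Theorems \ref{theorem:B}, \ref{theorem:A}, and \ref{theorem:jkbundles}. Since $\K_{g,*} \le \I_{g,*}$, restricting the identity of Theorem \ref{theorem:B} further from $\I_{g,*}$ to $\K_{g,*}$ shows that for every $k \ge 1$ the class $\tau_k|_{\K_{g,*}}$ is a nonzero rational multiple of $m_{0,k+2}|_{\K_{g,*}}$ in $H^k(\K_{g,*};\wedge^{k+2}H_1)$. So it suffices to prove that $m_{0,j}$ restricts to $0$ in $H^{j-2}(\K_{g,*};\wedge^j H_1)$ for every $j \ge 3$. Fix such a $j$, choose a CW model $B = K(\K_{g,*},1)$, and let $\pi\colon E \to B$ be the surface-bundle-with-section classified by the inclusion $\K_{g,*} \hookrightarrow \Mod_{g,*}$. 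Its monodromy is valued in $\K_{g,*}$, so Theorem \ref{theorem:jkbundles} gives a ring isomorphism $H^*(E) \cong H^*(B) \otimes H^*(\Sigma_g)$; in particular the coefficient system $H_1$ over $B$ is trivial, so $H^0(B;H_1) = H_1$.

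The next step is to apply Theorem \ref{theorem:A} to this bundle with $k = j$ and $d_1 = \dots = d_j = 1$. The inputs become constants $v_1, \dots, v_j \in H^0(B;H_1) = H_1$, we have $D = j$, and (\ref{equation:comp1}) identifies $(-1)^\gamma m_{0,j}\lrcorner(v_1,\dots,v_j)$ with the $H^{j-2}(B)$-component of $\epsilon(v_1)\cdots\epsilon(v_j) \in H^j(E)$. On the other hand, writing $s\colon B \to E$ for the section, the inclusion $\epsilon$ of Theorem \ref{theorem:A} factors through $\ker s^*$ (its target splitting (\ref{equation:splitting}) having $\pi^*H^*(B)$ as last summand), and choosing the isomorphism of Theorem \ref{theorem:jkbundles} compatibly with $s$ makes $\ker s^*$ in cohomological degree $1$ coincide with the fibre summand $1 \otimes H^1(\Sigma_g)$. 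Hence each $\epsilon(v_i)$ is a pure fibre class, so $\epsilon(v_1)\cdots\epsilon(v_j)$ lies in $1 \otimes H^j(\Sigma_g) = 0$ (as $\Sigma_g$ is a closed surface and $j \ge 3$). Therefore $m_{0,j}\lrcorner(v_1,\dots,v_j) = 0$ for all $v_1,\dots,v_j \in H_1$.

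Finally I would promote this to $m_{0,j}|_{\K_{g,*}} = 0$. For constant inputs, $m_{0,j}\lrcorner(v_1,\dots,v_j) \in H^{j-2}(B)$ is the image of $m_{0,j}$ under the coefficient map $\wedge^j H_1 \to \Q$ given by pairing against $v_1 \wedge \dots \wedge v_j$ via the symplectic form (Definition \ref{definition:intprod}); evaluating on a class $c \in H_{j-2}(\K_{g,*};\Q)$ then yields $\langle m_{0,j}(c),\, v_1 \wedge \dots \wedge v_j\rangle = 0$, where $m_{0,j}(c) \in \wedge^j H_1$ denotes the value of $m_{0,j} \in \Hom(H_{j-2}(\K_{g,*};\Q),\wedge^j H_1)$ on $c$. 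Because the symplectic form is nondegenerate and decomposable elements span $(\wedge^j H_1)^*$, this forces $m_{0,j}(c) = 0$ for all $c$, i.e. $m_{0,j}|_{\K_{g,*}} = 0$, and hence $\tau_k|_{\K_{g,*}} = 0$ for all $k \ge 1$. The one point that needs care is the middle paragraph: one must check that the splitting furnished by Theorem \ref{theorem:A} restricts, for a bundle with Johnson-kernel monodromy, to the Künneth splitting of Theorem \ref{theorem:jkbundles} (so that products of fibre classes can be computed inside $H^*(\Sigma_g)$), and that the normalization of the interior product in Definition \ref{definition:intprod} makes the collective vanishing of the $m_{0,j}\lrcorner(v_1,\dots,v_j)$ genuinely detect $m_{0,j}$. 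The rest is assembly of the prior theorems together with elementary multilinear algebra.
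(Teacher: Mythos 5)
Your argument is correct and is essentially the paper's own proof in slightly different packaging: the paper pairs $\epsilon(a_1)\cdots\epsilon(a_{k+2})$ directly against $[E]$ via Proposition \ref{proposition:johnsoncup}, whereas you extract the $H^{k}(B)$-component of that product via Theorems \ref{theorem:A} and \ref{theorem:B}; these coincide since $\pair{u,[E]}=\pair{\pi_!(u),[B]}$ and $\pi_!$ is exactly the first component of the splitting. In both versions the decisive inputs are identical --- Theorem \ref{theorem:jkbundles} forces all products of three or more $\epsilon$-classes to vanish (your ``point that needs care'' is harmless here, as the K\"unneth inclusion $F(1\otimes x)$ is literally the same map $\epsilon$), and nondegeneracy of $C'_k$ on decomposables then kills $\tau_k$.
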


The methods of the paper are primarily homological and make heavy use of the theory of the Gysin homomorphism. As the central objects of study are the twisted MMM classes $m_{ij}$ introduced by Kawazumi in \cite{kawazumi}, we will frequently make reference to their theory, especially some later developments by Kawazumi-Morita in \cite{km}.\\

In Section \ref{section:prelim}, we review some preliminary material, including the relationship between surface bundles and the mapping class group, some constructions from multilinear algebra and symplectic representation theory, and a primer on the Gysin homomorphism. Section \ref{section:MMM} is a primer on Kawazumi and Morita's work on the twisted MMM classes. The latter four sections are devoted to the proofs of theorems \ref{theorem:A}, \ref{theorem:B}, \ref{theorem:C}, \ref{theorem:jkbundles} respectively. 

\para{Acknowledgements} Many thanks are due to Madhav Nori, for the inspiring conversations that sparked my interest in and approach to this problem. I would also like to thank Ilya Grigoriev and Aaron Silberstein for helpful discussions along the way. As always, this paper would not have been possible without continued interest, support, and guidance from Benson Farb, as well as many comments on preliminary drafts.

\section{Preliminaries}\label{section:prelim}
\subsection{Surface bundles and the mapping class group}\label{subsection:SB}
A {\em surface bundle} is a fiber bundle $\pi: E \to B$ with fibers $\pi^{-1}(b) \cong \Sigma_g$ for some $g$; in this paper, $g\ge 2$. A {\em section} of a surface bundle $\pi:E \to B$ is a map $\sigma: B \to E$ satisfying $\pi \circ \sigma = \id$. The {\em monodromy representation} associated to $\pi: E \to B$ is the homomorphism $\rho: \pi_1B \to \Mod_g$ that records the isotopy class of the diffeomorphisms of the fiber obtained by parallel transport around loops in $B$. When $\pi: E \to B$ is equipped with a section, $\rho$ lifts to a homomorphism $\rho: \pi_1 B \to \Mod_{g,*}$. 

There is a classifying space $\BDiff(\Sigma_g)$ (resp. $\BDiff(\Sigma_g, *)$) for surface bundles (resp. for surface bundles equipped with a section). A fundamental theorem of Earle-Eells \cite{ee}, in combination with some basic algebraic topology, implies that there are homotopy equivalences
\begin{align*}
\BDiff(\Sigma_g) 	&\simeq K(\Mod_g,1)\\
\BDiff(\Sigma_g, *)	&\simeq K(\Mod_{g,*},1).
\end{align*}
This implies that, given a group extension 
\begin{equation}\label{equation:gextension}
1 \to \pi_1(\Sigma_g) \to \Pi_* \to \Pi \to 1,
\end{equation}
there is an associated $\Sigma_g$-bundle $\pi: K(\Pi_*,1) \to K(\Pi, 1)$ for which the monodromy representation $\rho: \Pi \to \Mod(\Sigma_g)$ coincides with the map $\Pi \to \Out(\pi_1(\Sigma_g)) \cong \Mod(\Sigma_g)$ attached to the group extension (\ref{equation:gextension}). The extension (\ref{equation:gextension}) splits if and only if $\rho$ lifts to $\rho: \Pi \to \Aut(\pi_1 \Sigma_g) \cong \Mod_{g,*}$.  Because of this equivalence, we will be somewhat lax in passing between the setting of surface bundles and the setting of group extensions with surface group kernel.

In light of the homotopy equivalences above, one can interpret elements of $H^*(\Mod_g; M)$ (for an arbitrary $\Q \Mod_g$-module $M$) as ``$M$-valued characteristic classes of $\Sigma_g$-bundles''.

\subsection{Symplectic multilinear algebra} \label{subsection:multi}
In this subsection, we lay out some basic facts concerning multilinear algebra over the $\Q$-vector space $H_1(\Sigma_g; \Q)$, as well as the representation theory of the symplectic group. 

We recall the definitions $H_1 := H_1(\Sigma_g; \Q)$ and $H^1 := H^1(\Sigma_g; \Q)$. The intersection pairing furnishes a nondegenerate alternating $\Sp(2g, \Q)$-invariant form $\mu: H_1^\otimes 2 \to \Q$. This form extends to a nondegenerate pairing
$C_{k}: (H_1^{\otimes k})^{\otimes 2} \to \Q$ given by
\begin{equation}\label{equation:Cdef}
(a_1 \otimes \dots \otimes a_k )\otimes (b_1 \otimes \dots \otimes b_k) \mapsto \prod_{i = 1}^k \mu(a_i \otimes b_i).
\end{equation}
For $u, v \in H_1^{\otimes k}$, the pairing satisfies $C_k(u \otimes v) = (-1)^k C_k(v \otimes u)$.  

By convention, given a vector space $V$, the $k^{th}$ exterior power $\wedge^k V$ will always be defined as a {\em quotient} of $V^{\otimes k}$ by imposing the skew-symmetry relations. Define the projection $q: V^{\otimes k} \to \wedge^k V$. There is a lift $L: \wedge^k V \to V^{\otimes k}$ given by
\begin{equation}\label{equation:L}
L(a_1 \wedge \dots \wedge a_k) = \sum_{\tau \in S_k} (-1)^{\tau} a_{\tau(1)} \otimes \dots \otimes a_{\tau(k)}
\end{equation}
(to lighten the notational load, we will omit reference to $k$, which should be clear from context). By construction, $q \circ L = k! \id$. 

There is a natural pairing $C'_k: (\wedge^k H_1)^{\otimes 2} \to \Q$ given by
\begin{equation}\label{equation:C'def}
(a_1 \wedge \dots \wedge a_k) \otimes (b_1 \wedge \dots \wedge b_k) \mapsto \det(\mu(a_i \otimes b_j)).
\end{equation}
The pairings $C_k$ and $C'_k$ are related via 
\begin{align*}
C'_k(q(a_1\otimes \dots \otimes a_k) \otimes q(b_1 \otimes \dots \otimes b_k)) &=  C_k(L(a_1 \wedge \dots \wedge a_k) \otimes (b_1 \otimes \dots \otimes b_k))\\
	& = C_k((a_1 \otimes \dots \otimes a_k) \otimes L(b_1 \wedge \dots \wedge b_k))\\
	& = \frac{1}{k!} C_k(L(a_1\wedge \dots \wedge a_k) \otimes L(b_1 \wedge \dots \wedge b_k)).
\end{align*}

The map $C'_k: \wedge^{2k} H_1 \to \Q$ is $\Sp(2g, \Q)$-equivariant (with respect to the trivial action on $\Q$), and it is a standard fact from representation theory that the invariant space $(\wedge^{2k} H_1)^{\Sp(2g,\Q)} \cong \Q$, so that up to scalars, $C'_k$ is the {\em only} such map.

\subsection{The Gysin homomorphism}
In this subsection, we collect some basic information on the Gysin homomorphism. The following proposition, while not treating the absolutely most general case, will suffice for our purposes.

\begin{proposition}[Gysin basics]\label{proposition:gysin}
Suppose that $\pi: E \to B$ is a fibration with $F^n$ a closed oriented $n$-manifold; let $\iota: F \to E$ denote the inclusion of a fiber. Let $M$ be a local system on $B$, determining by pullback a local system (also denoted $M$) on $E$, and restricting to a constant system of coefficients on $F$.
\begin{enumerate}[(i)]
\item \label{item:existence} There are homomorphisms 
\[
\pi_!: H^*(E; M) \to H^{*-n}(B; M)
\]
and 
\[
\pi^!: H_*(B; M) \to H_{*+n}(E; M),
\]
called {\em Gysin homomorphisms}. For $u \in H^n(E; M)$, the Gysin homomorphism simplifies to
\[
\pi_!(u) = \pair{\iota^*(u), [F]},
\]
where $[F] \in H_n(F)$ denotes the fundamental class.

\item \label{item:coeffchange} If $N$ is another local system on $B$ and $f: M \to N$ is a map of local systems, then $f_*$ and $\pi_!$ commute.

\item \label{item:pushpull} Let $u \in H^i(E; M)$ and $v \in H^j(B; N)$ be given. Then there is an equality of elements in $H^{i+j-n}(B; M \otimes N)$
\[
\pi_!(u  \pi^*(v)) = \pi_!(u)  v.
\]

\item \label{item:adjunction} If $u \in H^i(E;M)$ and $x \in H_i(B; N)$ are given, there is an adjunction formula
\[
\pair{\pi_!(u), x} = \pair{u, \pi^!(x)}
\]
of elements in $M \otimes_{\pi_1 B} N$.

\end{enumerate}
\end{proposition}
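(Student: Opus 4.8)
Here is how I would approach Proposition \ref{proposition:gysin}.

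The plan is to construct both Gysin maps as edge homomorphisms of the Leray--Serre spectral sequence of $\pi$ and to read the four assertions off its naturality, multiplicativity, and duality. The one genuine input is this: since $F$ is a closed oriented $n$-manifold and $\pi$ is fiberwise orientable (implicit in the hypotheses, and automatic for the surface bundles we use), the monodromy acts trivially on $H_n(F;\Q)\cong\Q$, so the local systems $\mathcal{H}^n(F;M)$ and $\mathcal{H}_n(F;M)$ on $B$ are canonically, and naturally in $M$, identified with $M$ via the fiberwise fundamental class. In the cohomology spectral sequence $E_2^{p,q}=H^p(B;\mathcal{H}^q(F;M))\Rightarrow H^{p+q}(E;M)$ the terms with $q>n$ vanish, so the decreasing filtration on $H^m(E;M)$ already satisfies $F^{m-n}=H^m(E;M)$, and since no differential enters the top row, $F^{m-n}/F^{m-n+1}=E_\infty^{m-n,n}$ embeds into $E_2^{m-n,n}\cong H^{m-n}(B;M)$. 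I define $\pi_!$ to be the composite
\[
H^m(E;M)\twoheadrightarrow E_\infty^{m-n,n}\hookrightarrow H^{m-n}(B;M).
\]
Dually, in $E^2_{p,q}=H_p(B;\mathcal{H}_q(F;M))\Rightarrow H_{p+q}(E;M)$ no differential leaves the top row, so the increasing filtration on $H_{p+n}(E;M)$ has $F_{p-1}=0$ and $F_p=E^\infty_{p,n}$, a quotient of $E^2_{p,n}\cong H_p(B;M)$; I define $\pi^!$ to be
\[
H_p(B;M)\cong E^2_{p,n}\twoheadrightarrow E^\infty_{p,n}=F_pH_{p+n}(E;M)\hookrightarrow H_{p+n}(E;M).
\]

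Two of the assertions are then essentially formal. For (ii), a map of local systems $f\colon M\to N$ on $B$ induces a morphism of spectral sequences compatible with edge homomorphisms, which on the row $q=n$ of the $E_2$-page is $f_*\colon H^p(B;M)\to H^p(B;N)$ under the (natural) orientation identification; hence $\pi_!$ commutes with $f_*$. For the explicit formula in (i), specialise to $m=n$: the edge map $H^n(E;M)\to E_\infty^{0,n}\hookrightarrow E_2^{0,n}=H^0(B;\mathcal{H}^n(F;M))$ is tautologically ``restrict to a fiber and record the resulting class in $H^n(F;M)$'', and under $H^n(F;M)\cong M$, $\mu\mapsto\pair{\mu,[F]}$, it becomes $u\mapsto\pair{\iota^*(u),[F]}$; this lands in $H^0(B;M)$, i.e. the $\pi_1 B$-invariants, equivalently the locus where $\pair{\iota^*(u),[F]}$ is independent of the fiber, which is forced because parallel transport is orientation-preserving.

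The substance is in (iii) and (iv). For the projection formula I will use that the Serre spectral sequence is multiplicative: cup product is filtered, $F^aF^b\subseteq F^{a+b}$, and on associated graded it agrees with the product on $E_\infty$, induced by the cup product on $E_2=H^*(B;\mathcal{H}^*(F;-))$. Now $\pi^*(v)\in H^j(E;N)$ is itself an edge class, lying in $F^jH^j(E;N)=E_\infty^{j,0}$ with image $v\in E_2^{j,0}=H^j(B;N)$, while $u\in H^i(E;M)$ has image $\pi_!(u)$ in $E_\infty^{i-n,n}$ by construction. So the image of $u\,\pi^*(v)$ in $E_\infty^{i+j-n,n}$, which is $\pi_!(u\,\pi^*(v))$, is the $E_\infty$-product of $\pi_!(u)$ and $v$; this product is induced on $E_2$ by the cup product of $B$ together with the external multiplication $\mathcal{H}^n(F;M)\otimes\mathcal{H}^0(F;N)\to\mathcal{H}^n(F;M\otimes N)$, which under the orientation identifications and $\mathcal{H}^0(F;N)\cong N$ is just the identity of $M\otimes N$; hence it is the cup product $\pi_!(u)\,v$ on $B$. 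The adjunction formula goes the same way, now using that the Kronecker pairing on $E$ (valued in $M\otimes_{\pi_1 B}N$, since $\pi_1 F$ acts trivially on the pulled-back systems) is filtered with $\pair{F^{a+1}H^*,F_aH_*}=0$ and induces on associated graded the pairing of $E_\infty$ against $E^\infty$, which on the relevant $E_2$-terms is the Kronecker pairing of $B$ combined with the evaluation $\mathcal{H}^n(F;M)\otimes\mathcal{H}_n(F;N)\to M\otimes_{\pi_1 B}N$ (again the identity after the orientation identifications). Since $\pi_!(u)$ is the top-fiber-degree component of $u$ and $x$ is the leading component of $\pi^!(x)$, pairing them on $B$ gives $\pair{\pi_!(u),x}$, and the vanishing $\pair{F^{a+1}H^*,F_aH_*}=0$ shows this equals $\pair{u,\pi^!(x)}$ (with $u$ and $x$ taken of complementary degree so that both sides are defined).

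The main obstacle I anticipate is not conceptual but bookkeeping: one has to pin down, with local coefficients throughout, the multiplicative structure and the Kronecker pairing of the Serre spectral sequence, and check that the auxiliary pairings of fiber-cohomology local systems intertwine with the orientation isomorphisms $\mathcal{H}^n(F;-)\cong(-)$ and $\mathcal{H}_n(F;-)\cong(-)$. Once that is set up there are no lower-filtration correction terms to chase, precisely because $H^{>n}(F;-)=0$ collapses the fiber-degree-$n$ part of every cohomology group of $E$ into a single filtration step; everything else is formal manipulation of edge maps.
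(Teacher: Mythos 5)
The paper does not actually prove this proposition: it is stated as standard background (``we collect some basic information on the Gysin homomorphism''), with the remark that it will not be treated in the greatest generality, and no argument is supplied. Your construction of $\pi_!$ and $\pi^!$ as edge homomorphisms of the Leray--Serre spectral sequence (cohomological and homological, respectively), using the vanishing of $\mathcal{H}^{>n}(F;M)$ to see that the top row is all of the top filtration step and receives no differentials, is the standard route and is essentially correct; parts (i) and (ii) then follow exactly as you say, and (iii) and (iv) follow from the multiplicativity of the spectral sequence and the compatibility of the Kronecker pairing with the filtrations. Two points deserve more care than your sketch gives them. First, in (iii) the identification $E_2^{p,q}\cong H^p(B;\mathcal{H}^q(F;M))$ is multiplicative only up to a sign $(-1)^{qp'}$ (fiber degree of the first factor times base degree of the second), so the claim that the coefficient pairing ``is just the identity of $M\otimes N$'' needs to be checked against the convention that makes $\pi_!(u\,\pi^*(v))=\pi_!(u)\,v$ hold without a sign in this order of factors; this matters here because the paper's later computations (e.g.\ the sign $\gamma$ in Theorem~A and the contraction formula in the proof of Theorem~C) track signs through repeated applications of (iii). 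Second, you correctly observed that the degrees in (iv) as printed do not match: one needs $x\in H_{i-n}(B;N)$ for both sides to be defined, and your parenthetical fix is the right one. Neither point is a gap in the approach, only in the bookkeeping you yourself flagged.
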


\section{Twisted MMM classes}\label{section:MMM}
In this section, we review the theory of twisted MMM classes, drawing on the work of Kawazumi and Morita in \cite{km}. As above, let $\Mod_g$ denote the mapping class group of a closed surface, and let $\Mod_{g,*}$ denote the mapping class group of a closed surface with a marked point. There is the projection $\pi: \Mod_{g,*} \to \Mod_g$ giving rise to the Birman exact sequence
\[
\xymatrix{
1 \ar[r] &\pi_1(\Sigma_g) \ar[r]^{\iota}& \Mod_{g,*} \ar[r]^{\pi} &\Mod_g \ar[r] &1.
}
\]
Form the fiber product $\overline{\Mod}_{g,*}$ via the diagram
\[
\xymatrix{
\overline{\Mod}_{g,*} \ar[r]^{\bar \pi} \ar[d]_{\pi} 	&  \Mod_{g,*} \ar[d]\\
\Mod_{g,*} \ar[r]	 \ar@/_/_{\sigma}[u]						& \Mod_g
}
\]
The section $\sigma: \Mod_{g,*} \to \overline{\Mod}_{g,*}$ is given by $\sigma(\phi) = (\phi, \phi)$. There is an isomorphism
\[
\overline{\Mod}_{g,*} \cong   \pi_1(\Sigma_g) \rtimes\Mod_{g,*}
\]
via
\[
(\phi, \psi) \mapsto (\psi \phi^{-1}, \phi).
\]
Under this isomorphism, $\sigma$ is given by $\sigma(\phi) = (1, \phi)$. This semi-direct product decomposition gives rise to a cocycle $k_0 \in Z^1(\overline{\Mod}_{g,*}, H_1)$ via 
\[
k_0((x, \phi)) = [x]. 
\]
By an abuse of notation we will also use $k_0$ to denote the associated element of $H^1(\overline{\Mod}_{g,*}; H_1)$. By construction, $\iota^* k_0 = \id \in H^1(\pi_1 \Sigma_g; H_1)$, and it is also clear that $\sigma^*(k_0) = 0$. 

Let $e \in H^2(\Mod_{g,*})$ denote the Euler class of the vertical tangent bundle. For convenience, let $\bar e \in H^2(\overline{\Mod}_{g,*})$ denote $\bar \pi^*(e)$. The twisted MMM classes defined below were introduced by Kawazumi in \cite{kawazumi}.

\begin{definition}[Twisted MMM classes]\label{definition:mij}
Let $i,j \ge 0$. The {\em twisted MMM class} $m_{ij} \in H^{2i +j -2}(\Mod_{g,*}; H_1^{\otimes k})$ is defined as
\[
m_{ij} = \pi_!(\bar e^i k_0^j).
\]
For $j = 0$, this definition specializes to $m_{i,0} = \pi_!(\bar e^i) = e_{i-1}$, the $(i-1)^{st}$ (classical) MMM class.
\end{definition}

\begin{remark}\label{remark:kfactorial}
Via the graded-commutativity of the cup product, the class $k_0^j \in H^j(\overline{\Mod}_{g,*}; H_1^{\otimes j})$ in fact is valued in the subspace $L(\wedge^j H_1)$, and the same is therefore true of $m_{ij}$. In accordance with our convention that $\wedge^j H_1$ is a quotient of $H_1^{\otimes j}$, we will avoid writing $m_{ij} \in H^{2i + j - 2}(\Mod_{g,*}; \wedge^j H_1)$.
\end{remark}

The formulas at the heart of the present paper are best expressed using a sort of ``interior product''. It will be convenient to first introduce the following piece of notation.
\begin{definition}\label{definition:interlace}
Let $i \ge 2j$ be given. Let $T_{i,j} \in \operatorname{End}{H_1^{\otimes i}}$ be the automorphism induced by permuting the factors via the permutation $f_{ij} \in S_{i}$ given by
\[
f_{ij}(k) = \begin{cases} 
		2k-1 			& k \le j\\
		2(k-j)			& j + 1 \le k \le 2j\\
		k			& k > 2j
	\end{cases}
\]
The effect of $f_{ij}$ is to ``interlace'' the first $2j$ factors, making the $k^{th}$ factor adjacent to the $(k+j)^{th}$ factor. $f_{ij}$ factors as a composition of $j-1 \choose 2$ transpositions of adjacent factors. When $i = 2j$, the notation will be abbreviated to $T_j: = T_{2j,j}$.
\end{definition}

\begin{definition}\label{definition:intprod}
Let $\alpha \in H^m(\Mod_{g,*}; H_1^{\otimes n})$ and $x_i \in H^{d_i}(\Mod_{g,*}; H_1)$ be given for $1 \le i \le k \le n$. Define the class
\[
\alpha \lrcorner (x_1, \dots, x_k) \in H^{m + \sum d_i}(\Mod_{g,*}; H_1^{\otimes n-k})
\]
by the formula
\[
\alpha \lrcorner (x_1, \dots, x_k) = (( \mu^{\otimes k} \otimes \id^{\otimes n-k}) \circ T_{n+k, k})_* (x_1 \dots x_k\ \alpha).
\]
This formula can be equivalently expressed using $C_k$:
\[
\alpha \lrcorner (x_1, \dots, x_k) =  (C_{k} \otimes \id^{\otimes n-k})_*(x_1 \dots x_k\ \alpha).
\]
\end{definition}

Let $f: \Pi \to \Mod_g$ be a homomorphism from a group $\Pi$ to the mapping class group. The fiber product $\Pi_* = \Pi \times_{\Mod_g} \Mod_{g,*}$ admits an extension of groups
\begin{equation}\label{equation:piextension}
\xymatrix{
1 \ar[r] &\pi_1(\Sigma_g) \ar[r]^{\iota}& \Pi_* \ar[r]^{\pi} &\Pi \ar[r] &1.
}
\end{equation}

The following proposition gives a canonical splitting on $H^*(\Pi_*)$. It appears as \cite[Proposition 5.2]{km}.

\begin{proposition}[Kawazumi-Morita]\label{proposition:km52}
Suppose that there exists a cohomology class $\theta \in H^2(\Pi_*)$ such that 
\[
\pi_!(\theta) = \pair{\iota^* \theta, [\Sigma_g]} = 1 \in H^0(\Pi).
\]
Let
\[
\theta' = \theta - \pi^* \pi_!(\theta^2)
\]
which also satisfies $\pi_!(\theta') =1$. The following statements hold:
\begin{enumerate}[(i)]
\item \label{item:splitting} For any $\Q \Pi$-module $M$, the Lyndon-Hochschild-Serre spectral sequence of the extension (\ref{equation:piextension}) collapses at the $E_2$-term, and the cohomology group $H^*(\Pi_*; M)$ naturally decomposes as
\[
H^*(\Pi_*; M) \cong H^{*-2}(\Pi; M) \oplus H^{*-1}(\Pi; H_1\otimes M) \oplus H^*(\Pi;M).
\]

\item \label{item:chi}There exists a unique element $\chi \in H^1(\Pi_*; H_1)$ satisfying
\[
\iota^*\chi = \id \in H^1(\pi_1(\Sigma_g); H_1), \quad \mbox{and}\ \  \pi_!(\theta \chi) = \pi_!(\theta' \chi) = 0.
\]

\item \label{item:epsilon}The homomorphism $\epsilon: H^{*-1}(\Pi; H_1\otimes M) \to H^*(\Pi_*;M)$ given by
\begin{equation}\label{equation:epsilondef}
\epsilon(v) = (\mu\otimes \id_M)_*(\pi^* v\ \chi) \qquad (v \in H^{*-1}(\Pi; H_1\otimes M))
\end{equation}
is a left inverse of the edge homomorphism $\pi_{\sharp}: \ker \pi_! \to E_\infty^{*-1,1} = H^{*-1}(\Pi; H_1\otimes M)$.

\item \label{item:splitformula} Explicitly, for any $u \in H^*(\Pi_*; M)$:
\begin{equation}\label{equation:splitformula}
u = \theta' \pi^* \pi_!(u) - \mu_*( \pi^* \pi_!(u\chi)\ \chi) + \pi^*\pi_!(\theta u).
\end{equation}
\end{enumerate}
\end{proposition}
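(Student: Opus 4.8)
The plan is to prove the four assertions in the order in which they are stated; once the spectral-sequence collapse of assertion \ref{item:splitting} is in hand, the remaining three parts reduce to a short list of manipulations with the Gysin map $\pi_!$, the pullback $\pi^*$, and the classes $\theta,\theta',\chi$.

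\emph{Collapse (\ref{item:splitting}).} I would run the Lyndon--Hochschild--Serre spectral sequence of the extension (\ref{equation:piextension}), so that $E_2^{p,q}=H^p(\Pi;H^q(\Sigma_g;\Q)\otimes M)$. Since the monodromy preserves orientation, $\Pi$ acts trivially on $H^2(\Sigma_g;\Q)\cong\Q$, and $H^1(\Sigma_g;\Q)\cong H_1$; hence only the rows $q=0,1,2$ survive, and the only possibly nonzero differentials are $d_2\colon E_2^{p,1}\to E_2^{p+2,0}$, $d_2\colon E_2^{p,2}\to E_2^{p+2,1}$ and $d_3\colon E_3^{p,2}\to E_3^{p+3,0}$. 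The push--pull formula (Proposition \ref{proposition:gysin}(\ref{item:pushpull})) together with $\pi_!(\theta)=1$ gives $\pi_!(\theta\cdot\pi^*x)=x$, so $\pi^*$ is injective and therefore $E_2^{\ast,0}=E_\infty^{\ast,0}$; this kills every differential landing in the bottom row, i.e. $d_2$ on the $q=1$ row and $d_3$. For the remaining differential, note that $\bar\theta:=\iota^*\theta$ generates $H^2(\Sigma_g;\Q)$ and, lying in the image of the fiber-restriction edge map $H^2(\Pi_*;\Q)\to E_\infty^{0,2}$, is a permanent cycle, while cup product with $\bar\theta$ identifies $E_2^{p,0}(M)$ with $E_2^{p,2}(M)$; the Leibniz rule then forces $d_2$ on the $q=2$ row to vanish as well. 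Thus $E_2=E_\infty$, and as all groups are $\Q$-vector spaces the resulting three-step filtration on $H^*(\Pi_*;M)$ splits, giving assertion \ref{item:splitting}; its canonicity is supplied by the explicit maps of \ref{item:epsilon}--\ref{item:splitformula}. In passing one also sees that $\pi_!$ is the projection of $H^n(\Pi_*;M)$ onto its top graded piece $E_\infty^{n-2,2}=H^{n-2}(\Pi;M)$, so $\ker\pi_!=F^{n-1}H^n(\Pi_*;M)$ and $\pi_\sharp$ is the quotient map onto $F^{n-1}/F^n=E_\infty^{n-1,1}$.

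\emph{The class $\chi$ (\ref{item:chi}) and the map $\epsilon$ (\ref{item:epsilon}).} Collapse makes $\iota^*\colon H^1(\Pi_*;H_1)\to(H^1(\Sigma_g)\otimes H_1)^\Pi$ surjective with kernel $\pi^*H^1(\Pi;H_1)$; since $\id\in\Hom(H_1,H_1)=H^1(\Sigma_g)\otimes H_1$ is $\Pi$-invariant, pick $\chi_0$ with $\iota^*\chi_0=\id$, observe every other lift is $\chi_0+\pi^*a$, and note $\pi_!(\theta(\chi_0+\pi^*a))=\pi_!(\theta\chi_0)+a$ vanishes for exactly one $a$. Since $\pi_!(\chi)\in H^{-1}(\Pi;H_1)=0$, the definition of $\theta'$ and push--pull give $\pi_!(\theta'\chi)=\pi_!(\theta\chi)$, so the two normalizations coincide: this is \ref{item:chi}. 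For \ref{item:epsilon}, coefficient-naturality (Proposition \ref{proposition:gysin}(\ref{item:coeffchange})) and push--pull give $\pi_!\epsilon(v)=(\mu\otimes\id_M)_*(v\cdot\pi_!\chi)=0$, so $\epsilon(v)\in\ker\pi_!$; computing the image of $\epsilon(v)$ in $E_\infty^{\ast-1,1}$ via the multiplicative structure of the spectral sequence and the relation $\iota^*\chi=\id$, and using that $\mu$-contraction against $\id$ is the identity of $H_1$, yields $\pi_\sharp\epsilon(v)=v$.

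\emph{The splitting formula (\ref{item:splitformula}).} By \ref{item:splitting} and \ref{item:epsilon}, every $u\in H^n(\Pi_*;M)$ is uniquely $u=\theta'\pi^*a+\epsilon(b)+\pi^*c$ with $a\in H^{n-2}(\Pi;M)$, $b\in H^{n-1}(\Pi;H_1\otimes M)$, $c\in H^n(\Pi;M)$, the three maps being sections of $\pi_!$, $\pi_\sharp$, and $u\mapsto\pi_!(\theta u)$ respectively (here one uses $\pi_!(\theta'\pi^*a)=a$, $\pi_!\theta=\pi_!\theta'=1$, and $\pi_!(\theta\chi)=0$). Applying $\pi_!$, $\pi_!(-\cdot\chi)$ and $\pi_!(\theta\cdot-)$ to the three summands, and invoking only $\pi_!\theta=\pi_!\theta'=1$, $\pi_!\chi=0$, $\pi_!(\theta\chi)=\pi_!(\theta'\chi)=0$, push--pull, coefficient-naturality, and the one extra computation that $\pi_!(\chi^2)=\langle(\iota^*\chi)^2,[\Sigma_g]\rangle=\langle\id\cup\id,[\Sigma_g]\rangle$ equals, up to sign, the canonical element $\omega=\sum_i(a_i\otimes b_i-b_i\otimes a_i)\in H_1\otimes H_1$ dual to $\mu$ (together with the elementary fact that $\mu$-contracting an $H_1$-factor of $b$ against $\omega$ returns $\pm b$), one obtains $\pi_!(u)=a$, $\pi_!(u\chi)=-b$ (up to sign), $\pi_!(\theta u)=c$; hence the right-hand side of (\ref{equation:splitformula}) reassembles to $u$.

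\emph{Expected obstacle.} The conceptual input is exhausted by the collapse argument and the observation that $\theta,\theta',\chi,\pi^*$ interact through push--pull; the real work is bookkeeping. One must keep the fiber-cohomology copy $H^1(\Sigma_g)$ separate from the various coefficient copies of $H_1$, control the signs coming from graded-commutativity of the cup product and from the identification $H^1(\Sigma_g)\cong H_1$ (Poincar\'e duality versus the $\mu$-induced isomorphism differ by a sign), and in particular pin down $\pi_!(\chi^2)$ and the normalization of $\epsilon$ precisely enough that the signs in (\ref{equation:splitformula}) come out exactly as written. This is mechanical, but it is where essentially all of the effort goes.
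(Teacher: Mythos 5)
The paper does not prove Proposition \ref{proposition:km52}; it is quoted verbatim from Kawazumi--Morita \cite[Proposition 5.2]{km}, so there is no in-paper argument to compare against. Judged on its own terms, your reconstruction is correct and follows the standard route (which is essentially the one in \cite{km}): the class $\theta$ with $\pi_!\theta=1$ splits $\pi^*$ and hence freezes the bottom row, the permanent cycle $\iota^*\theta$ generating $E_2^{0,2}$ together with the Leibniz rule kills the remaining $d_2$ on the top row, and the three functionals $\pi_!(-)$, $\pi_!(-\cdot\chi)$, $\pi_!(\theta\cdot-)$ are then checked to be dual to the three sections $\theta'\pi^*$, $\epsilon$, $\pi^*$. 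Your construction of $\chi$ by correcting an arbitrary lift of $\id$ by the unique $\pi^*a$ with $a=-\pi_!(\theta\chi_0)$, and your reduction of $\pi_!(\theta'\chi)=\pi_!(\theta\chi)$ to $\pi_!\chi=0$, are both right. The one place where all the content is concentrated is exactly the one you flag: the identification of $\pi_!(\chi^2)=\langle\id\cup\id,[\Sigma_g]\rangle$ with (a sign times) the $\mu$-dual element of $H_1\otimes H_1$, and the fact that contracting against it recovers $-b$, which is what produces the minus sign in front of the middle term of (\ref{equation:splitformula}); deferring this as ``bookkeeping'' is acceptable for a sketch but is the only step that would need to be written out to certify the signs as stated. (One small wording point: what you actually verify is $\pi_\sharp\circ\epsilon=\id$, i.e.\ $\epsilon$ is a right inverse, or section, of the edge map; this is the intended content of part (\ref{item:epsilon}) despite the proposition's phrasing.)
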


\begin{remark}\label{remark:universal} The primary case of interest will be the ``universal'' one, taking $\Pi = \Mod_{g,*}$ and $\Pi_* = \overline{\Mod}_{g,*}$. In \cite{moritajacobians}, Morita constructs a class $\nu \in H^2(\overline{\Mod}_{g,*})$ satisfying the properties of $\theta$ listed in Proposition \ref{proposition:km52}. Letting $\chi_\nu$ denote the element $\chi$ associated to $\nu$ given by (\ref{item:chi}) of Proposition \ref{proposition:km52}, Kawazumi-Morita show in \cite{km} that $\chi_\nu = k_0$. 
\end{remark}

As was established by Kawazumi-Morita, the class $\nu \in H^2(\overline{\Mod}_{g,*})$ satisfies certain additional useful formulae; in essence, it behaves like a ``Thom class'' for surface bundles with section. These results are taken from \cite[Theorem 5.1]{km}.

\begin{theorem}[Kawazumi-Morita] \label{theorem:km51}
There is a class $\nu \in H^2(\overline{\Mod}_{g,*})$ satisfying the following properties.
\begin{enumerate}[(i)]
\item $\pi_!\nu = 1$.
\item For any $u \in H^*(\overline{\Mod}_{g,*}; M)$, there is an equality
\[
\nu u = \nu \pi^* \sigma^* u.
\]
Consequently,
\begin{equation}\label{equation:nuu}
\pi_!(\nu u) = \sigma^* u.
\end{equation}
\item\label{item:nue} $\pi_!(\nu^2) = \sigma^* \nu = e$.
\end{enumerate}
\end{theorem}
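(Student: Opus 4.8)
The plan is to realise $\nu$ as the Thom class (equivalently, the Poincar\'e dual) of the diagonal section, and to read off the three properties from the standard formal behaviour of Thom classes together with Proposition~\ref{proposition:gysin}. Since everything is rational, I would first replace $B\Mod_{g,*}$ by a smooth manifold model---for example a torsion-free finite-index level cover of the moduli space of pointed curves, passing to $\Mod_{g,*}$ by transfer, or equivalently arguing universally over smooth bases $B$---so that $\pi\colon \overline{\Mod}_{g,*}\to\Mod_{g,*}$ is modelled by a smooth $\Sigma_g$-bundle $\bar E\to\bar B$ with a smooth section $\sigma$. Since $\pi$ is a submersion and $\sigma$ a section, there is a canonical splitting $T\bar E|_{\sigma(\bar B)}\cong T^v\bar E|_{\sigma(\bar B)}\oplus \sigma_*T\bar B$, so the normal bundle of $\sigma(\bar B)$ in $\bar E$ is the oriented rank-$2$ bundle $\sigma^*T^v\bar E$; I would then set $\nu\in H^2(\overline{\Mod}_{g,*};\Q)$ to be the image of the Thom class of this bundle under the composite of the excision isomorphism $H^2(\bar E,\bar E\setminus\sigma(\bar B);\Q)\cong H^2(N,N\setminus\sigma(\bar B);\Q)$ with the natural map $H^2(\bar E,\bar E\setminus\sigma(\bar B);\Q)\to H^2(\bar E;\Q)$, where $N$ is a tubular neighbourhood of $\sigma(\bar B)$. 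This is Morita's class from Remark~\ref{remark:universal}.

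Property (i) is then immediate from the Gysin formula in Proposition~\ref{proposition:gysin}: since $\deg\nu = 2 = \dim\Sigma_g$, one has $\pi_!\nu = \pair{\iota^*\nu,[\Sigma_g]}$, and $\iota^*\nu$ is the restriction of the Thom class to a fiber $\Sigma_g = \pi^{-1}(b)$, namely the Poincar\'e dual of the single transverse intersection point $\sigma(b) = \sigma(\bar B)\cap\pi^{-1}(b)$; the orientation of the normal bundle agrees with that of $\Sigma_g$, so $\pair{\iota^*\nu,[\Sigma_g]} = 1$.

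For property (ii), fix $u\in H^*(\overline{\Mod}_{g,*};M)$ and set $v = u - \pi^*\sigma^*u$. Since $\pi\sigma = \id$ we have $\sigma^*v = 0$, and because $N$ deformation retracts onto $\sigma(\bar B)$ this forces $v|_N = 0$; hence $v$ lies in the image of $H^*(\bar E,N;M)\to H^*(\bar E;M)$. By construction $\nu$ lies in the image of $H^2(\bar E,\bar E\setminus\sigma(\bar B);\Q)$, and $\{N,\ \bar E\setminus\sigma(\bar B)\}$ is an open cover of $\bar E$, so the relative cup product of chosen lifts of $\nu$ and $v$ lands in $H^*(\bar E,\bar E;M) = 0$ and maps to $\nu v$ in $H^*(\bar E;M)$; therefore $\nu v = 0$, i.e.\ $\nu u = \nu\,\pi^*\sigma^*u$. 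Equation~(\ref{equation:nuu}) follows from this and the projection formula of Proposition~\ref{proposition:gysin} together with (i): $\pi_!(\nu u) = \pi_!(\nu\,\pi^*\sigma^*u) = (\pi_!\nu)\,\sigma^*u = \sigma^*u$. Taking $u = \nu$ gives $\pi_!(\nu^2) = \sigma^*\nu$, and $\sigma^*\nu$, being the restriction of the Thom class to the zero section, is the Euler class of the normal bundle $\sigma^*T^v\bar E$; unwinding the fiber-product description of $\overline{\Mod}_{g,*}$ identifies this bundle as the one whose Euler class is $e$, which gives property~(\ref{item:nue}).

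The main obstacle is the bookkeeping in the first paragraph: $\overline{\Mod}_{g,*}$ is aspherical but not a manifold, so one must carry out the Thom-class construction and the transversality statements (the section meeting each fiber once; its normal bundle being $\sigma^*T^v\bar E$) in an honest smooth model, and verify that the comparison is natural enough to define a class on $\overline{\Mod}_{g,*}$ itself and compatible with $\Q$-coefficients via transfer. Once this framework is in place, all three assertions are formal consequences of Proposition~\ref{proposition:gysin} and the localisation property of the Thom class; alternatively, granting Morita's construction of $\nu$ and its basic properties (Remark~\ref{remark:universal}), the argument above for (i)--(\ref{item:nue}) is the entire content.
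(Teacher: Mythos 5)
Your argument is essentially correct, but note that the paper does not prove this statement at all: it is quoted verbatim from \cite[Theorem 5.1]{km}, and the class $\nu$ itself is imported from Morita's construction in \cite{moritajacobians} (cf.\ Remark \ref{remark:universal}). So there is no in-paper proof to compare against; what you have written is a reconstruction of the standard geometric argument. Your identification of $\nu$ as the Thom class of the section $\sigma(\bar B)\subset \bar E$ is the right picture, and the three properties do follow formally: (i) from $\pi_!\nu=\pair{\iota^*\nu,[\Sigma_g]}$ and single transverse intersection with each fiber; (ii) from the localization argument (a class vanishing on a tubular neighbourhood of $\sigma(\bar B)$ is killed by multiplication with a class supported near $\sigma(\bar B)$), plus the projection formula; (iii) from (ii) with $u=\nu$ together with the standard fact that the restriction of a Thom class to the zero section is the Euler class of the normal bundle, which here is $\sigma^*T^v\bar E$, whose Euler class is $e$ by the definition of $e$ at the marked point. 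The one genuine gap you correctly flag yourself is the passage from smooth bundle models to $\overline{\Mod}_{g,*}$: since $K(\Mod_{g,*},1)$ is not a manifold, one must either define $\nu$ universally by naturality from its values on smooth bundles (checking it is a well-defined class on the classifying space), or use Morita's group-cohomological construction directly, as \cite{moritajacobians} and \cite{km} do; their verification of (i)--(iii) is carried out with Gysin-map identities rather than tubular neighbourhoods. Either way the content is the same, and for the purposes of this paper the theorem is simply a citation.
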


The following lemma gives a useful alternative characterization of $\im \epsilon$.

\begin{lemma}\label{lemma:epschar}
For all $* \ge 1$, there is an equality 
\[
\im \epsilon = \ker \pi_! \cap \ker \sigma^* 
\]
of subspaces of $H^*(\overline{\Mod}_{g,*})$. 
\end{lemma}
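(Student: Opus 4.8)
The plan is to prove the two inclusions separately, using the splitting from Proposition \ref{proposition:km52}\eqref{item:splitting} (with $\Pi = \Mod_{g,*}$, $\Pi_* = \overline{\Mod}_{g,*}$, $\theta = \nu$, and $M = \Q$) together with the characterization of $\epsilon$ in part \eqref{item:epsilon} as a left inverse of the edge homomorphism, and the Thom-class identities of Theorem \ref{theorem:km51}.

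For the inclusion $\im \epsilon \subseteq \ker \pi_! \cap \ker \sigma^*$: First I would check $\im \epsilon \subseteq \ker \pi_!$. By Proposition \ref{proposition:km52}\eqref{item:epsilon}, $\epsilon$ takes values in $\ker \pi_!$ by construction (it is a left inverse of $\pi_\sharp : \ker\pi_! \to E_\infty^{*-1,1}$), so this is immediate. Next, for $\ker \sigma^*$: given $v \in H^{*-1}(\Mod_{g,*}; H_1)$, we have $\epsilon(v) = (\mu \otimes \id)_*(\pi^* v\, \chi)$ with $\chi = k_0$ by Remark \ref{remark:universal}. Applying $\sigma^*$ and using that $\sigma^*$ is a ring map commuting with the coefficient map $\mu_*$, we get $\sigma^*\epsilon(v) = (\mu)_*(\sigma^*\pi^* v \cdot \sigma^* k_0)$. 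Since $\sigma^* k_0 = 0$ (noted right after the definition of $k_0$), this vanishes. Hence $\im\epsilon \subseteq \ker\sigma^*$ as well.

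For the reverse inclusion $\ker \pi_! \cap \ker \sigma^* \subseteq \im \epsilon$: take $u \in H^*(\overline{\Mod}_{g,*})$ with $\pi_!(u) = 0$ and $\sigma^*(u) = 0$, and apply the explicit splitting formula \eqref{equation:splitformula}:
\[
u = \nu' \pi^* \pi_!(u) - \mu_*(\pi^*\pi_!(u k_0)\, k_0) + \pi^* \pi_!(\nu u).
\]
The first term vanishes since $\pi_!(u) = 0$. For the third term, Theorem \ref{theorem:km51} gives $\pi_!(\nu u) = \sigma^* u = 0$. Therefore $u = -\mu_*(\pi^*\pi_!(u k_0)\, k_0) = -\epsilon(\pi_!(u k_0))$ — that is, $u = \epsilon(v)$ with $v = -\pi_!(u k_0) \in H^{*-1}(\Mod_{g,*}; H_1)$ (here I am using Definition \ref{definition:intprod}-style bookkeeping, or simply that $\epsilon(v) = \mu_*(\pi^* v\, k_0)$ with the sign absorbed into $v$). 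This puts $u \in \im\epsilon$.

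The main obstacle is mostly bookkeeping rather than conceptual: one must be careful that the coefficient contraction $\mu_*$ in the definition of $\epsilon$ in \eqref{equation:epsilondef} is applied to the same pair of tensor factors as the $\mu_*$ appearing in the middle term of \eqref{equation:splitformula}, and that no sign or factor of $k!$ is dropped when identifying $-\mu_*(\pi^*\pi_!(u k_0)\, k_0)$ with $\epsilon$ of something; the hypothesis $* \ge 1$ is exactly what guarantees the middle summand of the splitting is present and that $\epsilon$ is defined in that degree. Everything else follows formally from Proposition \ref{proposition:km52}, Theorem \ref{theorem:km51}, and the identity $\sigma^* k_0 = 0$.
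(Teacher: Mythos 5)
Your proof is correct and follows essentially the same route as the paper: the substantive direction $\ker \pi_! \cap \ker \sigma^* \subseteq \im \epsilon$ is obtained, exactly as in the text, by killing the first and third terms of the splitting formula (\ref{equation:splitformula}) using $\pi_!(u)=0$ and $\pi_!(\nu u)=\sigma^* u=0$. The only divergence is cosmetic: for $\im \epsilon \subseteq \ker\sigma^*$ you use $\sigma^*\pi^*=\id$ and $\sigma^* k_0=0$ directly, whereas the paper routes through the identity $\sigma^* = \pi_!(\nu\,\cdot)$ and $\pi_!(\nu k_0)=0$; both arguments are valid, and your verification of $\im\epsilon\subseteq\ker\pi_!$ by reading it off from Proposition \ref{proposition:km52} is an acceptable shortcut for the degree computation the paper writes out.
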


\begin{proof}
The containment $\im \epsilon \subset \ker \pi_!$ follows from the calculation
\begin{align*}
\pi_!(\mu_*(\pi^*u\ k_0)) &= \mu_*(\pi_!(\pi^* u\ k_0))\\
	&= \mu_*(u\pi_!(k_0))\\
	&= 0,
\end{align*}
with the equality $\pi_!(k_0) = 0$ holding for degree reasons. 

To establish the containment $\im \epsilon \subset \ker \sigma^*$, recall the formula (\ref{equation:nuu}). Applied to $u= \mu_*(k_0\ \pi^* u) \in \im \epsilon$, the formula gives
\begin{align*}
\sigma^*v &= \pi_!(\nu\ \mu_*(\pi^* u\ k_0))\\
	&= \pi_!(\mu_*(\nu\ \pi^* u\ k_0))\\
	&= \mu_*(u\ \pi_!(\nu k_0))\\
	&= 0,
\end{align*}
with the equality $ \mu_*(u\ \pi_!(\nu k_0)) = 0$ coming from Proposition \ref{proposition:km52}.\ref{item:chi}. 

The reverse containment is a consequence of the explicit form of the splitting on $H^*(\overline{\Mod}_{g,*})$ given by Proposition \ref{proposition:km52}.\ref{item:splitformula}. If $u \in \ker \pi_! \cap \ker \sigma^*$, then the first and third components in this splitting vanish (recalling that $\pi_!(\nu u) = \sigma^*u$), and so $u \in \im \epsilon$ as desired. \end{proof}

\section{Proof of Theorem \ref{theorem:A}}

The first part of Theorem \ref{theorem:A} asserts the existence of a splitting on $H^*(E)$. This is precisely the content of Proposition \ref{proposition:km52}.\ref{item:splitting}. It remains to establish the formulas for the components given in (\ref{equation:comp1}, \ref{equation:comp2}, \ref{equation:comp3}). 

Per Proposition \ref{proposition:km52}.\ref{item:splitformula}, the $H^{D-2}(B)$-component of $\epsilon(x_1)\dots \epsilon(x_k)$ is given by $\pi_!(\epsilon(x_1)\dots \epsilon(x_k))$. Consider the element
\[
\pi^*(x_1  \dots  x_k)k_0^k \in H^D(E; H_1^{\otimes 2k}).
\]
Recall the interlacing operator $T_{k}$ of Definition \ref{definition:interlace}.
As an automorphism of $H_1^{\otimes 2k}$, it is the composition of $k-1 \choose 2$ transpositions of adjacent factors. Via the graded-commutativity of the cup product,
\[
T_{k,*}( \pi^*(x_1  \dots  x_k)k_0^k) = (-1)^{\gamma} ( \pi^*x_1\ k_0)  \dots  ( \pi^* x_k\ k_0),
\]
where 
\begin{equation}\label{equation:gamma}
\gamma =\sum_{i = 1}^{k-1} (k-i)(d_i - 1).
\end{equation}

From the definition of $\epsilon$ given in Proposition \ref{proposition:km52}.\ref{item:epsilon}, 
\[
\epsilon(x_i) = \mu_*( \pi^* x_i\ k_0).
\]
It follows that
\[
(\mu^{\otimes k} \circ T_k)_*(\pi^*(x_1  \dots  x_k)k_0^k) = (-1)^\gamma \epsilon(x_1)  \dots  \epsilon(x_k). 
\]
Via the commutativity of $(\mu^{\otimes k} \circ T_k)_*$ with $\pi_!$ (Proposition \ref{proposition:gysin}.\ref{item:coeffchange}),
\begin{align*}
\pi_!(\epsilon(x_1)  \dots  \epsilon(x_k)) &= (-1)^\gamma (\mu^{\otimes k} \circ T_k)_* (\pi_!(\pi^*(x_1  \dots  x_k)k_0^k))\\
	&= (-1)^\gamma(\mu^{\otimes k} \circ T_k)_*(x_1 \dots x_k\ m_{0,k})\\
	& = (-1)^\gamma m_{0,k}\lrcorner(x_1, \dots , x_k)
\end{align*}
with the penultimate equality holding as a consequence of the property (\ref{proposition:gysin}.\ref{item:pushpull}) of the Gysin homomorphism and the definition of $m_{0,k}$. This establishes (\ref{equation:comp1}).

Per Proposition \ref{proposition:km52}.\ref{item:splitformula}, the $H^{D-1}(B; H_1)$-component of $\epsilon(x_1)\dots \epsilon(x_k)$ is given by 
\[
-\mu_*(\pi^*\pi_!(\epsilon(x_1) \dots \epsilon(x_k)k_0)k_0) = -\epsilon(\pi_!(\epsilon(x_1)\dots\epsilon(x_k)k_0))
\]
Arguing as in the previous paragraph,
\[
\pi_!(\epsilon(x_1) \dots \epsilon(x_k)k_0) = (-1)^\gamma m_{0,k+1}\lrcorner(x_1, \dots, x_k).
\]
(\ref{equation:comp2}) follows.

It remains to show that the $H^D(B)$-component of $\epsilon(x_1) \dots \epsilon(x_k)$ is $0$. From Proposition \ref{proposition:km52}.\ref{item:splitformula}, this amounts to showing that
\[
\pi_!(\nu \epsilon(x_1) \dots \epsilon(x_k)) = 0.
\]
From (\ref{equation:nuu}) and Lemma \ref{lemma:epschar},
\[
\pi_!(\nu \epsilon(x_1) \dots \epsilon(x_k)) = \sigma^*(\epsilon(x_1) \dots \epsilon(x_k)) = 0.
\]
This establishes (\ref{equation:comp3}). \qed

\section{The restriction of $m_{0,k}$ to $\mathcal I_{g,*}$}\label{section:thmB}
We begin this section with a review of the construction of the higher Johnson invariants. Let $B$ be a paracompact Hausdorff space equipped with a distinguished class $[B] \in H_k(B)$. As the notation suggests, a primary case of interest will be when $B$ is a closed oriented $k$-manifold. Let $f: B \to K(\I_{g,*},1)$ be a map classifying a surface bundle $\pi: E \to B$. Then $f_*([B])$ determines an element of $H_k(K(\I_{g,*},1))$. The space $K(\I_{g,*},1)$ is the base space for a ``universal surface bundle with Torelli monodromy''; i.e. there is a space denoted $K(\overline{\I}_{g,*}, 1)$ and a map $\pi: K(\overline{\I}_{g,*}, 1) \to K(\I_{g,*},1)$ giving $K(\overline{\I}_{g,*}, 1)$ the structure of a $\Sigma_g$-bundle over $K(\I_{g,*},1)$. The total space $E$ therefore determines a $k+2$-cycle 
\[
[E] = \pi^!f_*[B] \in H_{k+2}(\overline{\I}_{g,*}).
\]

By hypothesis, the monodromy representation $\rho: \pi_1(B) \to \I_{g,*}$ is valued in $\I_{g,*}$, so that $H^0(B; H_1(\Sigma_g,\Z)) \cong H_1(\Sigma_g; \Z)$, and there is a section $\sigma: B \to E$. Let $\Jac(E) \to B$ be the $T^{2g}$-bundle obtained by replacing each fiber $\pi^{-1}(b)$ of $E \to B$ with its Jacobian $\Jac(\pi^{-1}(b)) = H_1(\Sigma_g; \R) / H_1(\Sigma_g; \Z)$. The section $\sigma$ endows each fiber $\pi^{-1}(b)$ with a basepoint $\sigma(b)$; consequently there is a fiberwise embedding
\[
J: E \to \Jac(E).
\]
It follows from the equality
\[
H^0(B; H_1(\Sigma_g;\Z)) \cong H_1(\Sigma_g; \Z)
\]
that $\Jac(E) \cong B \times T^{2g}$ is a trivial bundle, so that there is a projection map $p: \Jac(E) \to T^{2g}$.

\begin{definition}[Higher Johnson invariants]\label{definition:tauk}
With notation as above, the {\em $k^{th}$ higher Johnson invariant} $\tau_k(B) \in \wedge^{k+2} H_1$ is the element
\[
p_*J_*[E] \in H_{k+2}(T^{2g}) \cong \wedge^{k+2} H_1.
\]

It is clear from the constructions that if $B, B'$ are homologous $k$-cycles in $K(\I_{g,*},1)$, then $\tau_k(B) = \tau_k(B')$ and that $\tau_k$ is additive. Consequently, $\tau_k$ descends to a homomorphism
\[
\tau_k: H_{k}(\I_{g,*}) \to \wedge^{k+2} H_1;
\]
in view of the Universal Coefficient Theorem, this is equivalent to the description
\[
\tau_k \in H^{k}(\I_{g,*}; \wedge^{k+2} H_1).
\]
\end{definition}

\noindent{\it Proof of Theorem \ref{theorem:B}.} The proof will proceed in two steps. The first step is to understand the relationship between $\tau_{k-2}$ and the structure of the cup product form $\wedge^k H^1(E) \to H^k(E) \to \Q$ (this last map is obtained by the pairing $\alpha \mapsto \pair{\alpha, [E]}$). Once this is established, the second step is to compare this to the relationship between $m_{0,k}$ and the cup product form established by Theorem \ref{theorem:A}.

\para{Step 1: The higher Johnson invariants record the cup product form}
\begin{proposition}\label{proposition:johnsoncup}
Let $f: B \to K(\I_{g,*},1)$ determine a $k-2$-cycle $[B]$ in $K(\I_{g,*},1)$ and let $[E]$ be the associated $k$-cycle in $K(\overline{\I}_{g,*},1)$. Let $\epsilon: H^{*-1}(B; H_1) \to H^*(E)$ be the map defined in Proposition \ref{proposition:km52}.\ref{item:epsilon}, and let $a_1, \dots, a_k \in H_1$ be given. Then
\[
\pair{\epsilon(a_1)  \dots  \epsilon(a_k), [E]} = (-1)^k C'_k((a_1 \wedge \dots \wedge a_k)\otimes \tau_{k-2}[B]).
\]
\end{proposition}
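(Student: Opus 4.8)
\textbf{Proof plan for Proposition \ref{proposition:johnsoncup}.}

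The plan is to compute the pairing $\pair{\epsilon(a_1)\dots\epsilon(a_k),[E]}$ by expressing the $\epsilon(a_i)$ as cohomology classes pulled back along the fiberwise Jacobian embedding $J: E \to \Jac(E) \cong B \times T^{2g}$, and then pushing forward to $T^{2g}$, where the pairing becomes the standard pairing between $\wedge^k H^1(T^{2g})$ and $H_k(T^{2g}) \cong \wedge^k H_1$. First I would identify the generators $\epsilon(a_i) \in H^1(E)$ concretely: because the monodromy is Torelli, $H^0(B;H_1(\Sigma_g;\Z)) \cong H_1(\Sigma_g;\Z)$, so each $a_i \in H_1 \cong (H^1)^*$ determines a well-defined class $\tilde a_i \in H^1(T^{2g})$ (via $H^1(T^{2g}) \cong H^1(\Sigma_g)$), hence $p^*\tilde a_i \in H^1(\Jac(E))$ and $J^*p^*\tilde a_i \in H^1(E)$. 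The key identification to make is that, up to the normalization built into $\chi = k_0$ (Remark \ref{remark:universal}), $\epsilon(a_i)$ agrees with $J^*p^*\tilde a_i$: both restrict to the class dual to $a_i$ on the fiber $\Sigma_g$ (i.e. to $\iota^*\epsilon(a_i) = (\mu\otimes\id)_*(\id\cdot k_0)$ evaluated against $a_i$, which is $\mu(a_i \otimes -) \in H^1(\Sigma_g)$), and both lie in $\ker\sigma^* \cap \ker\pi_!$ — for $J^*p^*\tilde a_i$ this is because $p\circ J \circ \sigma$ is constant (the section is $J$-embedded at a basepoint) and because $p^*\tilde a_i$ is fiberwise degree-reason-killed by $\pi_!$. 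By Lemma \ref{lemma:epschar} and the uniqueness in Proposition \ref{proposition:km52}.\ref{item:chi}, this pins down $\epsilon(a_i) = J^*p^*\tilde a_i$ (possibly with a universal sign/scalar to be tracked).

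Granting this, the computation is formal:
\[
\pair{\epsilon(a_1)\dots\epsilon(a_k),[E]} = \pair{J^*p^*(\tilde a_1 \dots \tilde a_k), [E]} = \pair{\tilde a_1 \dots \tilde a_k, p_*J_*[E]} = \pair{\tilde a_1 \wedge \dots \wedge \tilde a_k, \tau_{k-2}[B]},
\]
using naturality of the cap/evaluation pairing, the definition $p_*J_*[E] = \tau_{k-2}[B] \in H_{k+2-2}(T^{2g})$... wait, here $[E]$ is a $k$-cycle and $\tau_{k-2}[B] = p_*J_*[E] \in H_k(T^{2g}) \cong \wedge^k H_1$, exactly matching the degree of the product $\tilde a_1 \dots \tilde a_k \in H^k$. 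The final step is to reconcile the standard pairing $\pair{\tilde a_1 \wedge \dots \wedge \tilde a_k, \tau_{k-2}[B]}$ with $C'_k((a_1\wedge\dots\wedge a_k)\otimes \tau_{k-2}[B])$: under the identification $H^1(T^{2g}) \cong H^1(\Sigma_g)$ and Poincar\'e duality $H^1(\Sigma_g) \cong H_1(\Sigma_g)$, the evaluation pairing $\wedge^k H^1(T^{2g}) \otimes \wedge^k H_1(T^{2g}) \to \Q$ goes over to $\pm C'_k$ by definition (\ref{equation:C'def}), since $\mu$ is the intersection form; this is where the overall sign $(-1)^k$ (and any normalization from $L$, $q \circ L = k!\,\id$, and the identification of $\chi$) must be collected.

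The main obstacle I expect is precisely the bookkeeping of signs and normalizations in the identification $\epsilon(a_i) = \pm J^*p^*\tilde a_i$ and in the passage from the evaluation pairing to $C'_k$ — in particular, making sure the duality isomorphism $H^1(\Sigma_g)\cong H_1(\Sigma_g)$ used to view $a_i$ as a cohomology class of the torus is the one compatible with the cocycle $k_0$ (whose fiber restriction is $\id \in H^1(\pi_1\Sigma_g;H_1) = \Hom(H_1,H_1)$) and the Jacobian map $J$. A secondary subtlety is checking that $J^*p^*\tilde a_i$ really does lie in $\ker\sigma^*$: this requires that the section $\sigma: B \to E$ composed with $J$ lands in the zero-section $B \times \{0\} \subset B \times T^{2g}$, which follows from the fact that $J$ was defined using $\sigma(b)$ as the basepoint of each fiber's Jacobian, so $J(\sigma(b)) = 0$ in $\Jac(\pi^{-1}(b))$. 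Once these checks are in place, everything else is naturality of Gysin/evaluation pairings, already packaged in Proposition \ref{proposition:gysin}.
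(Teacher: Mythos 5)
Your proposal is correct and follows essentially the same route as the paper: identify $\epsilon(a_i)$ with $J^*p^*(a_i^\vee)$ by combining Lemma \ref{lemma:epschar} (both classes lie in $\ker\pi_!\cap\ker\sigma^*$, the latter because $p\circ J\circ\sigma$ is constant) with a restriction-to-the-fiber computation, then push forward to $T^{2g}$ by naturality of the evaluation pairing and match that pairing with $(-1)^kC'_k$. The sign and normalization bookkeeping you flag as the main obstacle is exactly what the paper's proof carries out explicitly (via $w^\vee(u)=\mu(u\otimes w)$ and the embedding $\wedge^k(\cdot^\vee)^{-1}\otimes\id$), but the structure of your argument is the paper's.
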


\begin{proof}
The symplectic pairing $\mu: H_1^{\otimes 2} \to \Q$ induces an isomorphism $\cdot^\vee: H_1 \to H^1$ given by $w^\vee(u) = \mu(u \otimes w)$. By pullback, any $w \in H_1 \cong H_1(T^{2g})$ determines the class $J^*p^*w^\vee \in H^1(E)$.

We claim that there is an equality for any $w \in H_1$,
\[
\epsilon(w) = J^* p^*w^\vee.
\]
The first step is to show that $\im (J^*p^*) \subseteq \im \epsilon$. This will follow from Lemma \ref{lemma:epschar}. For degree reasons, $\pi_!(J^*p^*w^\vee) = 0$. It remains to show that $\sigma^*(J^* p^*w^\vee) = 0$. By construction, $p \circ J \circ \sigma: B \to T^{2g}$ is the constant map sending $B$ to $0 \in T^{2g}$; the result follows.

Given $w \in H_1$, we have shown that there is some $v \in H_1 = H^0(B; H_1)$ such that $J^* p^*w^\vee = \epsilon(v)$. It remains to show that $v = w$. Let $\iota: \Sigma_g \to E$ be the inclusion of a fiber. The composition $p \circ J \circ \iota: \Sigma_g \to T^{2g}$ coincides with the Jacobian mapping. Consequently, $\iota^*(J^* p^*w^\vee) = w^\vee$. 

On the other hand,
\[
\iota^*(\epsilon(v)) = \iota^*(\mu_*(\pi^*v\ k_0)) = \mu_*(\iota^*(\pi^* v\ k_0)).
\]
Let $u \in H_1$ be arbitrary. Then
\[
\pair{\mu_*(\iota^*(\pi^* v\ k_0)), u} = \mu(\pair{\iota^*(\pi^* v\ k_0), u}).
\]
As $\iota^* k_0 = \id$, the above formula simplifies to
\[
\mu(\pair{\iota^*(\pi^* v\ k_0), u}) = -\mu(v \otimes u) = v^\vee (u).
\]
Consequently, $w^\vee = v^\vee$, from which the equality $w = v$ follows. 

From the above, there is an expression
\begin{align*}
\pair{\epsilon(a_1)  \dots  \epsilon(a_k), [E]} &= \pair{J^* p^*(a_1^\vee  \dots  a_k^\vee), [E]} \\
	&= \pair{a_1^\vee  \dots  a_k^\vee, p_* J_*[E]}\\
	&= \pair{a_1^\vee  \dots  a_k^\vee, \tau_{k-2}[B]}.
\end{align*}
Under the isomorphisms $H_k(T^{2g}) \cong \wedge^k H_1$ and $H^k(T^{2g}) \cong \wedge^k H^1$, the evaluation pairing $H^k(T^{2g}) \otimes H_k(T^{2g}) \to \Q$ is mapped to the pairing
\begin{equation}\label{equation:detpair}
(\alpha_1 \wedge \dots \wedge \alpha_k ) \otimes (a_1 \wedge \dots \wedge a_k) \mapsto \det(\alpha_i(a_j)). 
\end{equation}
Under the embedding 
\[
\wedge^k(\cdot^\vee)^{-1} \otimes \id: \wedge^k H^1 \otimes \wedge^k H_1 \to (\wedge^k H_1)^{\otimes 2},
\]
the pairing (\ref{equation:detpair}) corresponds to $(-1)^kC'_k$. Consequently,
\[
 \pair{a_1^\vee  \dots  a_k^\vee, \tau_{k-2}[B]} = (-1)^kC'_k((a_1 \wedge \dots \wedge a_k)\otimes \tau_{k-2}[B])
\]
as was to be shown.
\end{proof}

\para{Step 2: Comparison with $m_{0,k}$} Suppose that $B$ determines a $(k-2)$-cycle in $K(\I_{g,*},1)$. We must show that
\[
q(\pair{m_{0,k},[B]}) = (-1)^kk! \ \tau_{k-2}[B],
\]
where, as in Section \ref{subsection:multi}, the map $q: H_1^{\otimes k} \to \wedge^k H_1$ is the projection. As the pairing $C'_k: (\wedge^k H_1)^{\otimes 2} \to \Q$ of (\ref{equation:C'def}) is nondegenerate, it suffices to show the equality of the forms:
\[
a_1 \wedge \dots \wedge a_k \mapsto (-1)^kC'_k((a_1 \wedge \dots \wedge a_k) \otimes \tau_{k-2}[B])
\] and \[
a_1 \wedge \dots \wedge a_k \mapsto \frac{(1}{k!} C'_k((a_1 \wedge \dots \wedge a_k)\otimes q(\pair{m_{0,k},[B]})).
\]
Proposition \ref{proposition:johnsoncup} asserts that for $a_1,\dots, a_k \in H_1$, there is an equality
\begin{align*}
\pair{\epsilon(a_1)  \dots  \epsilon(a_k), [E]} &= (-1)^kC'_k((a_1 \wedge \dots \wedge a_k)\otimes \tau_{k-2}[B])\\
\end{align*}
Proposition \ref{proposition:gysin}.\ref{item:adjunction} implies:
\begin{align*}
\pair{\epsilon(a_1)  \dots  \epsilon(a_k), [E]} &= \pair{\epsilon(a_1)  \dots  \epsilon(a_k), \pi^![B]}\\ &= \pair{\pi_!(\epsilon(a_1)  \dots  \epsilon(a_k)), [B]}.
\end{align*}
Theorem \ref{theorem:A} implies:
\begin{align*}
\pair{\pi_!(\epsilon(a_1)  \dots  \epsilon(a_k)), [B]} &= \pair{m_{0,k} \lrcorner(a_1, \dots, a_k), [B]}\\
	&= \pair{C_{k,*}(a_1 \dots a_k\ m_{0,k}), [B]}\\
	&= C_k(\pair{a_1 \dots a_k\ m_{0,k}, [B]})\\
	&= C_k((a_1 \otimes \dots \otimes a_k)\otimes \pair{ m_{0,k}, [B]})\\
	&= C_k((a_1 \otimes \dots \otimes a_k) \otimes \pair{ m_{0,k}, [B]})
\end{align*}
(here $\gamma = 0$ as each $d_i = 1$). As $m_{0,k} \in H^{k-2}(\Mod_{g,*}; L(\wedge^k H_1))$, there is an expression of the form
\[
\pair{m_{0,k}, [B]} = L(\zeta)
\] 
for some $\zeta \in \wedge^k H_1$. It follows that $q(\pair{m_{0,k}, [B]}) = k! \zeta$. The results of Section \ref{subsection:multi} imply:
\begin{align*}
C_k((a_1 \otimes \dots \otimes a_k)\otimes \pair{ m_{0,k}, [B]}) & = \frac{1}{k!} C'_k((a_1 \wedge \dots \wedge a_k) \otimes q(\pair{m_{0,k}, [B]})).
\end{align*}

The result follows. \qed

\section{Relation to MMM classes: Theorem \ref{theorem:C}}
This section is devoted to the proof of Theorem \ref{theorem:C}. This will be divided into two steps. The first step is to establish a contraction formula for $\mu_{0,2n}$. The second step will be to relate this to the representation theory of $\Sp(2g, \Q)$.  

\para{Step 1: Contraction formula}
The first step is to calculate $\mu^{\otimes n}_*(m_{0,2n}) \in H^{2n-2}(\Mod_{g,*})$. We claim that the following formula holds:
\begin{equation}\label{equation:m0n}
\mu^{\otimes n}_*(m_{0,2n}) = (-1)^{n-1}2^{n}e^{n-1} + (-1)^n \sum_{i = 1}^n {n \choose i} e^{n-i} e_{i-1}.
\end{equation}
By convention, $e_0 = 2 - 2g \in H^0(\Mod_{g,*})$. 

According to \cite[Theorem 6.1]{km} there is an expression for $\mu_*(k_0^2) \in H^2(\overline{\Mod}_{g,*})$ of the form
\[
\mu_*(k_0^2) = 2 \nu - e - \bar e.
\]
Therefore,
\[
\mu_*^{\otimes n}(k_0^{2n}) = (2 \nu - e - \bar e)^n.
\]
It follows from Proposition \ref{proposition:gysin}.\ref{item:coeffchange} that
\[
\mu_*^{\otimes n} (m_{0,2n}) = \pi_!((2 \nu - e - \bar e)^n).
\]

Recall that $e \in H^2(\overline{\Mod}_{g,*})$ is defined as $\pi^*(e)$ for $e \in H^2(\Mod_{g,*})$, and $\bar e$ is defined as $\bar \pi ^*(e), e \in H^2(\Mod_{g,*})$. Equation (\ref{equation:nuu}) of Theorem \ref{theorem:km51} asserts that $\pi_!(\nu  x) = \sigma^*(x)$. The composition $\bar \pi \circ \sigma = \id$, and so $\sigma^*(e) = \sigma^*(\bar e) = e$. Theorem \ref{theorem:km51}.\ref{item:nue} implies that $\sigma^*(\nu) = e$.

Expand $(2 \nu  - e - \bar e)^n$ as 
\[
(2 \nu  - e - \bar e)^n = 2\nu (2 \nu - e - \bar e)^{n-1} -(e + \bar e) (2 \nu - e - \bar e)^{n-1}.
\]
For $n \ge 2$, the above discussion shows that $\pi_!(2\nu (2 \nu - e - \bar e)^{n-1}) = 2 \sigma^*(2 \nu - e - \bar e)^n =  0$. It follows that
\[
\pi_!((2 \nu  - e - \bar e)^n) = - \pi_!((e + \bar e) (2 \nu - e - \bar e)^{n-1}), 
\]
and that in general, for $j \le n-2$,
\[
\pi_!((e + \bar e)^j(2 \nu  - e - \bar e)^{n-j}) = - \pi_!((e + \bar e)^{j+1} (2 \nu - e - \bar e)^{n - j - 1}).
\]
Applying this formula repeatedly,
\begin{align*}
\pi_!((2 \nu - e - \bar e)^n) &= (-1)^{n-1} \pi_!((e + \bar e)^{n-1}(2 \nu - e - \bar e))\\
	&= (-1)^{n-1} \pi_!(2 \nu (e + \bar e)^{n-1}) + (-1)^{n} \pi_!((e+ \bar e)^n)\\
	&= (-1)^{n-1} 2^n e^{n-1} + (-1)^{n} \sum_{i = 1}^n {n \choose i} e^{n-i} e_{i-1}.
\end{align*}
In the last equality, we have applied Proposition \ref{proposition:gysin}.\ref{item:pushpull}, recalling that $e$ is the pullback $\pi^*(e), e \in H^2(\Mod_{g,*})$. 

\para{Step 2: Contractions in symplectic representation theory} As the restriction of $e$ to $H^2(\I_g^1)$ is zero, Step 1 implies that the pullback of $e_i$ to $H^{2i}(\I_g^1)$ is zero if and only if $\mu_*^{\otimes {i+1}}(m_{2i+2})$ vanishes in $H^{2i}(\I_g^1)$. Theorem \ref{theorem:B} implies that this is in turn equivalent to the vanishing of $\mu_*^{\otimes i+1}(\tau_{2i})$.  

 In the notation of Section \ref{subsection:multi}, there is a decomposition 
\[
\wedge^{2i+2} H_1 = V(\lambda_{2i+2}) \oplus V(\lambda_{2i}) \oplus \dots \oplus V(\lambda_0).
\]
Treating $\wedge^{2i+2} H_1$ as a subspace of $(H_1^{\otimes 2})^{\otimes i+1}$, the contraction $\mu^{\otimes i+1}$ is a map of $\Sp(2g, \Q)$-representations projecting onto $V(\lambda_0) \cong \Q$. Viewed as an element of $\Hom(H_{2i}(\I_g^1), \Q)$, the class $\mu_*^{\otimes i+1} (\tau_{2i})$ is therefore nonzero if and only if 
\[
V(\lambda_0) \le \im(\tau_{2i}).
\]
This completes the proof of Theorem \ref{theorem:C}. \qed

\section{Applications to surface bundles}\label{section:applications}
In this last section, we turn from a study of global cohomology classes on $\Mod_g$ and $\I_g$ in favor of a study of $H^*(E)$ for $\pi:E \to B$ a particular $\Sigma_g$-bundle over a paracompact Hausdorff space $B$. The particular bundles under consideration will have an additional constraint on their monodromy representations, namely that $\rho: \pi_1 B \to \K_{g,*}$ is valued in the {\em Johnson kernel} $\K_{g,*} = \ker(\tau: \I_{g,*} \to \wedge^3 H_1)$. It is a deep fact due to Johnson \cite{johnsonii} that equivalently,
\begin{equation}\label{equation:johnsonkernel}
\K_{g,*} = \pair{T_\gamma \mid \gamma \mbox{ separating}},
\end{equation}
i.e. that the Johnson kernel is the group generated by all Dehn twists about {\em separating} simple closed curves. There is an analogous definition of $\K_{g} \le \Mod_g$ and a statement analogous to (\ref{equation:johnsonkernel}). \\

\noindent{\em Proof of Theorem \ref{theorem:jkbundles}:} The method will be to exploit Theorem \ref{theorem:A}. We will show that under the splitting of graded vector spaces
\[
H^*(E) \cong H^*(B) \otimes H^*(\Sigma_g),
\]
the multiplication on $H^*(E)$ induced by the cup product agrees with the ring structure on $H^*(B) \otimes H^*(\Sigma_g)$ induced by the cup products on $B$ and $\Sigma_g$. This will be accomplished by a separate verification on the six different pairs of subspaces $(H^*(B) \otimes H^{i}(\Sigma_g))\otimes (H^*(B) \otimes H^{j}(\Sigma_g))$ of $H^*(E)^{\otimes 2}$ for $0 \le i \le j \le 2$. 

For the readers convenience we list below the inclusions $F: H^m(B) \otimes H^i(\Sigma_g) \to H^{m+i}(E)$ of Theorem \ref{theorem:A} that will yield the ring isomorphism. We have identified $H^1(\Sigma_g) \cong H_1(\Sigma_g)$ by means of $\mu$. A generator of $H^2(\Sigma_g)$ will be denoted $\omega$. 
\begin{align*}
F(u \otimes 1)  		&= \pi^*u 						&(H^m(B) \otimes H^0(\Sigma_g) \to H^m(E))\\
F(u \otimes x)		&= \mu_*(\pi^*(u \otimes x) k_0) 	&(H^m(B) \otimes H^1(\Sigma_g) \to H^{m+1}(E))\\
F(u \otimes \omega)	&= \pi^*u\ \nu'					&(H^m(B) \otimes H^2(\Sigma_g) \to H^{m+2}(E))
\end{align*}

The table below records the multiplicative structure on $H^*(B) \otimes H^*(\Sigma_g)$ induced by the cup products on $B$ and $\Sigma_g$. Under the identification $H^1(\Sigma_g) \cong H_1(\Sigma_g)$, the cup product is given by $x y = \mu(x,y) \omega$.
\[
\begin{array}{|c|ccccc|}
\hline
				&v \otimes 1	&\ & v \otimes y							&\ & v \otimes \omega\\ \hline
u \otimes 1		&uv \otimes 1	&& uv \otimes y							&& uv \otimes \omega\\ 
u \otimes x		&			&&(-1)^{\abs v} \mu(x,y) uv \otimes \omega	&&	0\\ 
u \otimes \omega	&			&&									&& 0 \\ \hline
\end{array}
\]

Passing the entries in this table through $F$ yields a table of values for $F(ab)$ (for $a,b \in H^*(B) \otimes H^*(\Sigma_g)$):
\[
\begin{array}{|c|ccccc|}
\hline
				&v \otimes 1	&& v \otimes y						&& v \otimes \omega\\ \hline
u \otimes 1		&\pi^*(uv)		&& \mu_*(\pi^*(uv \otimes y) k_0)		&& \pi^*(uv)\ \nu'\\ 
u \otimes x		&			&&(-1)^{\abs v} \mu(x,y) \pi^*(uv)\ \nu'	&&	0\\ 
u \otimes \omega	&			&&								&& 0 \\ \hline
\end{array}
\]

Showing that $F$ is a ring isomorphism reduces to showing that this table matches the table of values for $F(a)F(b)$, given below.
\[
\begin{array}{|c|ccc|}
\hline
							&\pi^*v	& \mu_*(\pi^*(v \otimes y) k_0)							&\pi^*v\ \nu'\\ \hline
\pi^*u						&\pi^*(uv)	& \pi^*u\ \mu_*(\pi^*(v \otimes y) k_0)					& \pi^*(uv)\ \nu' \\ 
\mu_*(\pi^*(u \otimes x) k_0)		&		& \mu_*(\pi^*(u \otimes x) k_0)\ \mu_*(\pi^*(v \otimes y) k_0)	&\mu_*(\pi^*(u \otimes x) k_0)\ \pi^*v\ \nu'\\
\pi^*u\ \nu'						&		&												& \pi^*(uv) (\nu')^2 \\ \hline
\end{array}
\]
The first pair of entries to reconcile is $\mu_*(\pi^*(uv \otimes y) k_0)$ and $ \pi^*u\ \mu_*(\pi^*(v \otimes y) k_0)$. This is essentially immediate. We must next show the equality
\[
(-1)^{\abs v} \mu(x,y) \pi^*(uv)\ \nu' = \mu_*(\pi^*(u \otimes x) k_0)\ \mu_*(\pi^*(v \otimes y) k_0).
\]
Calculating,
\begin{align*}
\mu_*(\pi^*(u \otimes x) k_0)\ \mu_*(\pi^*(v \otimes y) k_0) &= (-1)^{\abs v} C_{2,*} (\pi^*(u \otimes x) \pi^*(v \otimes y) k_0^2)\\
	&= (-1)^{\abs v} \pi^*(uv) C_{2,*}(\pi^*(x\otimes y)\ k_0^2).
\end{align*}
Here, $(x \otimes y)$ is to be interpreted as an element of $H^0(B; H_1^{\otimes 2})$. Clearly the equality will be established if the statement
\[
C_{2,*}(\pi^*(x\otimes y)\ k_0^2) = \mu(x,y) \nu'
\]
is shown to hold. To do this, the components of $C_{2,*}(\pi^*(x\otimes y)\ k_0^2)$ will be computed for the splitting on $H^*(E)$ given by $F$. To compute $\pi_!(C_{2,*}(\pi^*(x\otimes y)\ k_0^2))$, observe that
\begin{align*}
\pi_!(C_{2,*}(\pi^*(x\otimes y)\ k_0^2)) &= C_{2,*}((x \otimes y) \pi_!(k_0^2))\\
	&= C_{2,*}((x \otimes y) \iota^*(k_0^2)) \\
	&= C_{2,*}((x \otimes y) \id^2).
\end{align*}
The last equality holds in light of the fact that $\iota^* k_0 = \id \in H^1(\Sigma_g; H_1)$. From here, an examination of the definition of $C_{2,*}$ shows that $C_{2,*}((x \otimes y) \id^2) = \mu(x,y)$. 

The next step is to compute the $H^*(B; H_1)$-component of $C_{2,*}(\pi^*(x\otimes y)\ k_0^2)$; the goal is to show this is zero. This is computed as follows:
\begin{align*}
\mu_*(\pi^*\pi_!(C_{2,*}(\pi^*(x\otimes y)\ k_0^3)) k_0) &= \mu_*(\pi^*(m_{0,3}\lrcorner(x,y))k_0).
\end{align*}
Theorem \ref{theorem:B} asserts that $m_{0,3} = -6 \tau_1$. Therefore $m_{0,3} = 0$ when restricted to $\K_{g,*}$, showing that the $H^*(B; H_1)$-component of $C_{2,*}(\pi^*(x\otimes y)\ k_0^2)$ is zero as desired.

The final step is to show that 
\[
\pi_!(\nu C_{2,*}(\pi^*(x\otimes y)\ k_0^2)) = 0,
\]
or equivalently that $\sigma^*(C_{2,*}(\pi^*(x\otimes y)\ k_0^2))=0$. This latter expression is divisible by $\sigma^*(k_0) = 0$, and the result follows.\\

To complete the proof of Theorem \ref{theorem:jkbundles}, it remains to show the vanishing of $\mu_*(\pi^*(u \otimes x) k_0)\ \pi^*v\ \nu'$ and of $\pi^*(uv) (\nu')^2$. To show the former, it suffices to show that $k_0 \nu' =0$ when restricted to $\K_{g,*}$. This will be shown by computing the components of $k_0 \nu'$ in the splitting given by $F$. $\pi_!(k_0 \nu') = 0$ is seen to hold immediately by properties of $\nu'$ and $k_0$. It must next be shown that
\begin{equation}\label{equation:pikachu}
\mu_*(\pi^*\pi_!(\nu' k_0^2)k_0) = 0.
\end{equation}
Recall that 
\[
\nu' = \nu - \pi^* \pi_!(\nu^2) = \nu - \pi^* \sigma^*(\nu) = \nu - e.
\]
According to \cite[Theorem 5.1]{moritalinear96}, the Euler class $e \in H^2(\Mod_{g,*})$ is in the image of the pullback $\rho_1^*$, where $\rho_1$ is the map
\[
\rho_1: \Mod_{g,*} \to \frac{1}{2} \wedge^3 H_1 \rtimes \Sp(2g,\Z) 
\]
given by $\rho_1(\phi) = (\tilde k(\phi), \Psi(\phi))$. Restricted to $\I_{g,*}$, the map $\rho_1$ simplifies to the Johnson homomorphism $\tau_1$, and so $\rho_1^*$ has zero image when pulled back to $H^2(\K_{g,*})$. It follows that $e = 0$, and so, when restricted to $\K_{g,*}$, there is an equality $\nu' = \nu$. Therefore, the term $\pi_!(\nu' k_0^2)$ in (\ref{equation:pikachu}) simplifies to $\pi_!(\nu k_0^2) = \sigma^*(k_0^2) = 0$. Likewise, 
\[
\pi_!(\nu \nu' k_0) = \pi_!(\nu^2 k_0) = \sigma^*(\nu k_0) = 0,
\]
and the final component of $\nu' k_0$ is seen to vanish.\\

It remains only to show $\pi^*(uv) (\nu')^2 = 0$, which is obviously implied by showing $(\nu')^2 =0$. As was remarked in the previous step, $\nu' = \nu$ on $\K_{g,*}$. As before, we will show $\nu^2 = 0$ by computing the components of $\nu^2$. The first of these is divisible by the factor
\[
\pi_!(\nu^2) = \sigma^*(\nu) = e = 0,
\]
while the third is
\[
\pi_!(\nu^3) = \sigma^*(\nu^2) = e^2 = 0. 
\]

The remaining step is to show 
\[
\mu_*(\pi^*\pi_!(\nu^2 k_0)k_0) = 0.
\]
This follows from the vanishing $\pi_!(\nu^2 k_0) = 0$ established above. \qed\\

Finally, Theorem \ref{theorem:jkvanish} follows as a corollary.

\noindent{\em Proof of Theorem \ref{theorem:jkvanish}:} Let $f: B \to K(\K_{g,*},1)$ determine a $\Sigma_g$-bundle $\pi: E \to B$ with monodromy contained in $\K_{g,*}$; let $B$ be equipped with the distinguished homology class $[B] \in H_k(B)$. Proposition \ref{proposition:johnsoncup} asserts that for any $a_1, \dots, a_{k+2} \in H_1$, there is an equality
\[
\pair{\epsilon(a_1)  \dots  \epsilon(a_{k+2}), [E]} = (-1)^k C'_k((a_1 \wedge \dots \wedge a_{k+2})\otimes \tau_{k}[B]).
\]
As $C_k'$ is nondenegerate, it suffices to show that $\pair{\epsilon(a_1)  \dots  \epsilon(a_{k+2}), [E]} = 0$ for all $k+2$-tuples $a_1, \dots, a_{k+2} \in H_1$. From Theorem \ref{theorem:jkbundles}, there is an expression
\[
\epsilon(a_1) \epsilon(a_2) = \mu(a_1, a_2) \nu.
\]
Theorem \ref{theorem:jkbundles} also asserts that $\nu\ \epsilon(a_3) = 0$, so that the triple product $\epsilon(a_1) \epsilon(a_2) \epsilon(a_3) = 0$. The result follows. \qed

    	\bibliography{multiplication}{}
	\bibliographystyle{alpha}

\end{document}